\numberwithin{equation}{section}
\theoremstyle{definition}
\numberwithin{equation}{section}
\newtheorem{theorem}{\bf Theorem}[section]
\newtheorem{remark}{\bf Remark}[section]
\newtheorem{proposition}{Proposition}[section]
\newtheorem{definition}{Definition}[section]
\newtheoremstyle
{remarkstyle}
{}
{11pt}
{}
{}
{\bfseries}
{:}
{     }
{\thmname{#1} \thmnumber{#2} }
\theoremstyle{remarkstyle}
\begin{document}
	\title{On Time-Changed Linear Birth-Death Process with Immigration at Extinction}
	\author{Kuldeep Kumar Kataria}
	\address{K.K. Kataria, Department of Mathematics,
		 Indian Institute of Technology Bhilai, Durg 491002, INDIA.}
	 \email{kuldeepk@iitbhilai.ac.in}
	 	\author[Pradeep Vishwakarma]{Pradeep Vishwakarma}
	 \address{P. Vishwakarma, Department of Mathematics,
	 	Indian Institute of Technology Bhilai, Durg, 491002, INDIA.}
	 \email{pradeepv@iitbhilai.ac.in}
	
	\subjclass[2010]{Primary : 60J27; Secondary: 60J20}
	
	\keywords{linear birth-death process, inverse stable subordinator, fractional derivative}
	\date{\today}
	
	\maketitle
	\begin{abstract}
	 We study an immigration effect in the time-changed linear birth-death process where the immigration occurs only if the population goes extinct. We call this process as the time-fractional linear birth-death process with immigration (TFLBDPwI). Its transient probabilities are obtained using the Adomian decomposition method. In a particular case, we discuss the accuracy of approximation for the obtained transient probabilities. Also, the first two moments  of TFLBDPwI are derived. Later, we consider some particular cases of the TFLBDPwI and study their distributional properties in detail.
	\end{abstract}
	\section{Introduction}\label{sec1}
		 Birth-death process is a continuous-time Markov process used to model population growth over time. It has wide applications in both theoretical and practical contexts, such as modeling queues in service systems, epidemics, inventory systems, and the evolution of biological populations. In the linear birth-death process (see \cite{Feller1964}, p. 456), the state zero is an absorbing state, that is, once the population get extinct it cannot be revived again. Thus, the linear birth-death process is not a suitable model for applications dealing with real-life situations where the population starts from zero. In such cases, a birth-death process with immigration is more appropriate. Recently, Kataria and Vishwakarma \cite{Kataria2025} studied an immigration effect in the time-changed linear birth-death process where the immigration can happen at any state. For some recent works on the birth-death processes with immigration, we refer the reader to \cite{Crescenzo2016}, \cite{Dessalles2018} and \cite{Kataria2024}.

	Here, we consider the effect of immigration in  the linear birth-death process only when the population get extinct. We call this process as the linear birth-death process with immigration (LBDPwI) and denote it by $\{N(t)\}_{t\ge0}$. Let $\lambda=\alpha,\ \lambda_n=n\lambda$ for all $n\ge1$ be the birth rates and $\mu_n=n\mu$ for all $n\ge1$ be the death rates. Here, $\lambda $ and $\mu$ are positive constants and $\alpha>0$ is the rate of immigration.	
	Then, the state probabilities $p(n,t)=\mathrm{Pr}\{N(t)=n\}$, $n\ge0$ of LBDPwI solve the following system of differential equations:
	\begin{equation}\label{pr0}
		\frac{\mathrm{d}}{\mathrm{d}t}p(n,t)=\begin{cases}
			-\alpha p(0,t)+\mu p(1,t),\ n=0,\vspace{0.1cm}\\
			-(\lambda+\mu)p(1,t)+\alpha p(0,t)+2\mu p(2,t),\ n=1,\vspace{0.1cm}\\
			-n(\lambda+\mu)p(n,t)+(n-1)\lambda p(n-1,t)+(n+1)\mu p(n+1,t),\ n\ge2,
		\end{cases}
	\end{equation}
	where $p(n,t)=0$ for all $n<0$. If no individual is present at time $t=0$ then the initial conditions are $p(0,0)=1$ and $p(n,0)=0$ for all $n\ne0$. From (\ref{pr0}), the probability generating function (pgf) $G(u,t)=\sum_{n=0}^{\infty}u^np(n,t)$, $|u|\leq1$ of LBDPwI solves 
	\begin{equation}\label{pgf1}
		\frac{\partial}{\partial  t}G(u,t)=(\lambda u-\mu)(u-1)\frac{\partial}{\partial u}G(u,t)+\alpha(u-1)p(0,t)
	\end{equation}
with $G(u,0)=1.$

	Over the past two decades, many random time-changed and time-fractional growth processes, for example, the fractional Poisson process (see \cite{Beghin2009}, \cite{Leskin2003}, \cite{Meerschaert2011}), the fractional pure birth process (see \cite{Orsingher2010}), and the fractional linear birth-death process (see \cite{Orsingher2011}), have been studied. A key feature of these fractional processes is global memory, which characterizes many real-world systems.

	In this paper, we introduce and study a time-changed variants of the LBDPwI $\{N(t)\}_{t\ge0}$ with an inverse $\nu$-stable subordinator $\{L_\nu(t)\}_{t\ge0}$ (for definition see Section \ref{insubdef}). We call it as the time-fractional linear birth-death process with immigration (TFLBDPwI) and denote it by $\{N^\nu(t)\}_{t\ge0}$. It is defined as $N^\nu(t)\coloneqq N(L_\nu(t)),\ 0<\nu\leq1$, where $N(t)$ and $L_\nu(t)$ are independent and $N^1(t)=N(t)$. It is shown that the state probabilities $p^\nu(n,t)=\mathrm{Pr}\{N^\nu(t)=n\}$ of TFLBDPwI satisfy the following system of fractional differential equations:
	\begin{equation}\label{frdiffequ}
		\frac{\mathrm{d}^\nu }{\mathrm{d}t^\nu}p^\nu(n,t)=\begin{cases}
			-\alpha p^\nu(0,t)+\mu p^\nu(1,t),\ n=0,\vspace{0.1cm}\\
			-(\lambda+\mu)p^\nu(1,t)+\alpha p^\nu(0,t)+2\mu p^\nu(2,t),\ n=1,\vspace{0.1cm}\\
			-n(\lambda+\mu)p^\nu(n,t)+(n-1)\lambda p^\nu(n-1,t)+(n+1)\mu p^\nu(n+1,t),\ n\ge2,
		\end{cases}
	\end{equation} 
	with the initial conditions $p^\nu(0,0)=1$ and $p^\nu(n,0)=0$ for all $n\ne0$.
	Here,
	\begin{equation*}
		\frac{\mathrm{d^\nu} }{\mathrm{d}t^\nu}f(t)=\begin{cases}
			\frac{1}{\Gamma(1-\nu)}\int_{0}^{t}\frac{(\mathrm{d}/\mathrm{d}s)f(s)}{(t-s)^\nu}\,\mathrm{d}s,\ 0<\nu<1,\vspace{0.1cm}\\
		 ({\mathrm{d}}/{\mathrm{d}t})f(t),\ \nu=1,
		\end{cases}
	\end{equation*}
 is the Caputo fractional derivative whose Laplace transform is given by  (see \cite{Kilbas2006})

	\begin{equation}\label{frderlap}
		\int_{0}^{\infty}e^{-zt}\left(\frac{\mathrm{d}^\nu}{\mathrm{d}t^\nu}f(t)\right)\,\mathrm{d}t=z^{\nu}\int_{0}^{\infty}e^{-zt}{f}(t)\,\mathrm{d}t-z^{\nu-1}f(0),\ \nu>0.
	\end{equation}

	In the last section, we consider two particular cases of the LBDPwI and study their time-fractional variants. First, we introduce and study the time-fractional linear birth process with immigration and then the time-fractional linear death process with immigration. For both the processes, we obtain the explicit form of their state probabilities, means and variances.

		  As the TFLBDPwI has immigration effect, it has potential applications to model phenomena with rapidly changing conditions in which the state zero is not observable. For example, it can be used to model the rapid spread of a virus in a particular region. Initially, no individual in the population is infected with the virus however at some point of time someone gets infected due to the migration from outside. Here, we assume that every individuals present within the population are subject to an identical birth and death rates. It is done to prevent variation in the population wellness.
		  
	 Next, we give a brief introduction of the Adomian decomposition method (ADM).
\subsection{Adomian decomposition method } 
It is a numerical method for solving functional equations. Let us consider the following functional equation:
 \begin{equation}\label{fne}
 	x=f+N(x),
 \end{equation}
 where $N$ is a non-linear operator and $f$ is a known function. In ADM, it is assumed that the solution $x$ of (\ref{fne}) and operator $N$ can be expressed in terms of absolutely convergent series $x=\sum_{k=0}^{\infty}x_k$ and $N(x)=\sum_{k=0}^{\infty}A_k(x_0,x_1,\dots,x_{k})$, respectively, where $A_k$ is called the $k$th Adomian polynomial in $x_0,x_1,\dots,x_k$ (see \cite{Adomian1984}-\cite{Adomian1994}, \cite{Rach1984}). So, (\ref{fne}) can be rewritten as follows:
 \begin{equation*}
 \sum_{k=0}^{\infty}x_k=f+\sum_{k=0}^{\infty}A_k(x_0,x_1,\dots,x_{k}).
 \end{equation*}
 In ADM, the series components are obtained by using the following recurrence relation:
 \begin{equation*}
 	x_0=f\ \ \text{and}\ \ x_k=A_{k-1}(x_0,x_1,\dots,x_{k-1}),\ k\ge1.
 \end{equation*}
 Note that determining the Adomian polynomials is a key component in this method. Adomian \cite{Adomian1985} introduced a method to derive these polynomials by introducing a parameterization of the solution $x$. However, if $N$ is a linear operator, for example, $N(x)=x$ then $A_n$'s are simply $x_n$. For some recent work related to the Adomian polynomials, we refer the reader to \cite{Duan2010}, \cite{Duan2011} and \cite{Kataria2016}.
	
	\section{Time-fractional LBDPwI} \label{sec2}
	Here, we introduce a time-changed version of the LBDPwI where the time is changed according to an inverse stable subordinator. First, we recall the definitions of stable subordinator and its inverse.
	 \subsection{Subordinator and its inverse}\label{insubdef}
	A Subordinator is  non-decreasing L\'evy process (see \cite{Applebaum2004}).
	
	Let $\{S_\nu(t)\}_{t\ge0}$, $\nu\in(0,1)$ be a subordinator  with $\mathbb{E}e^{-zS_\nu(t)}=e^{-z^\nu t}$, $z>0$. Then, it is called $\nu$-stable subordinator.
	The first passage time $L_\nu(t)=\inf\{s>0:S_\nu(s)>t\}$  of $\nu$-stable subordinator is called an inverse $\nu$-stable subordinator. Its Laplace transform is given by (see \cite{Meerschaert2011})
	\begin{equation}\label{subdenlap1}
		\int_{0}^{\infty}e^{-zt}\mathrm{Pr}\{L_\nu(t)\in\mathrm{d}x\}=z^{\nu-1}e^{-z^\nu x}\,\mathrm{d}x,\ z>0.
	\end{equation} 
	
	Let $\{N(t)\}_{t\ge0}$ be the LBDPwI as defined in Section \ref{sec1}. We consider a time-changed process $\{N^\nu(t)\}_{t\ge0},\ 0<\nu\leq1$ defined as follows:
	\begin{equation}\label{timec1}
		N^\nu(t)\coloneqq N(L_\nu(t)),
	\end{equation}
	where the inverse $\nu$-stable subordinator $\{L_\nu(t)\}_{t\ge0}$  is independent of the LBDPwI and $N^1(t)=N(t)$ for all $t\ge0$. We call this process as the time-fractional linear birth-death process with immigration (TFLBDPwI). 
	
	The following result provides the system of differential equations that governs the state probabilities of TFLBDPwI.
	\begin{theorem}\label{thm3.1}
	The state probabilities $p^\nu(n,t)=\mathrm{Pr}\{N^\nu(t)=n\}$, $n\ge0$ of TFLBDPwI solve the system of differential equations given in (\ref{frdiffequ}).	
	\end{theorem}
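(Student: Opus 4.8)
The plan is to reduce the claim to an identity between Laplace transforms, exploiting the independence of $\{N(t)\}_{t\ge0}$ and $\{L_\nu(t)\}_{t\ge0}$ together with the explicit Laplace transform (\ref{subdenlap1}) of the inverse $\nu$-stable subordinator. First, since $L_\nu(0)=0$ almost surely, we have $N^\nu(0)=N(0)$, so $p^\nu(0,0)=1$ and $p^\nu(n,0)=0$ for all $n\ne0$; this settles the initial conditions. Next, conditioning on the value of $L_\nu(t)$ and using independence,
\[
	p^\nu(n,t)=\int_0^\infty p(n,x)\,\mathrm{Pr}\{L_\nu(t)\in\mathrm{d}x\},\qquad n\ge0.
\]
Because $0\le p(n,x)\le1$ for all $x$, the Laplace transform $\hat p(n,s)=\int_0^\infty e^{-sx}p(n,x)\,\mathrm{d}x$ exists for $s>0$, and Tonelli's theorem together with (\ref{subdenlap1}) gives the key relation
\[
	\tilde p^\nu(n,z)\coloneqq\int_0^\infty e^{-zt}p^\nu(n,t)\,\mathrm{d}t=z^{\nu-1}\hat p(n,z^\nu),\qquad z>0.
\]

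The second step is to Laplace transform the system (\ref{pr0}) governing $\{N(t)\}$. Using the initial values $p(0,0)=1$ and $p(n,0)=0$ for $n\ge1$, this yields, for every $s>0$, the algebraic relations $s\hat p(0,s)-1=-\alpha\hat p(0,s)+\mu\hat p(1,s)$, then $s\hat p(1,s)=-(\lambda+\mu)\hat p(1,s)+\alpha\hat p(0,s)+2\mu\hat p(2,s)$, and $s\hat p(n,s)=-n(\lambda+\mu)\hat p(n,s)+(n-1)\lambda\hat p(n-1,s)+(n+1)\mu\hat p(n+1,s)$ for $n\ge2$. Now I would substitute $s=z^\nu$ and multiply each equation by $z^{\nu-1}$. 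On the right-hand sides each $z^{\nu-1}\hat p(n,z^\nu)$ becomes $\tilde p^\nu(n,z)$ by the key relation, producing exactly the Laplace transforms of the linear combinations on the right of (\ref{frdiffequ}). On the left-hand sides one gets $z^\nu\tilde p^\nu(n,z)-z^{\nu-1}p^\nu(n,0)$ (using $p^\nu(n,0)=p(n,0)$), which by (\ref{frderlap}) is precisely the Laplace transform of $\frac{\mathrm{d}^\nu}{\mathrm{d}t^\nu}p^\nu(n,t)$. Inverting the Laplace transform term by term then delivers (\ref{frdiffequ}).

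The only point requiring care is the legitimacy of these manipulations, not any genuine difficulty. All Laplace transforms converge for $z>0$ since $p(n,\cdot)\le1$; the interchange of the $x$-integration with the $t$-Laplace integration is justified by Tonelli's theorem because the integrands are nonnegative; and termwise Laplace inversion is valid because each $p^\nu(n,\cdot)$ is continuous (a mixture of the continuous functions $p(n,\cdot)$ against the law of $L_\nu(t)$, of total mass one) and bounded, hence uniquely determined by its Laplace transform. As a remark I would note the equivalent route through the pgf: the same substitution $s\mapsto z^\nu$ applied to the Laplace transform of (\ref{pgf1}) turns it into the fractional equation $\frac{\mathrm{d}^\nu}{\mathrm{d}t^\nu}G^\nu(u,t)=(\lambda u-\mu)(u-1)\frac{\partial}{\partial u}G^\nu(u,t)+\alpha(u-1)p^\nu(0,t)$, where $G^\nu(u,t)=\sum_{n\ge0}u^np^\nu(n,t)$, and (\ref{frdiffequ}) follows by comparing coefficients of $u^n$.
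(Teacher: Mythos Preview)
Your proposal is correct and follows essentially the same route as the paper's proof: represent $p^\nu(n,t)$ as a mixture of $p(n,\cdot)$ against the law of $L_\nu(t)$, use (\ref{subdenlap1}) to obtain the identity $\tilde p^\nu(n,z)=z^{\nu-1}\hat p(n,z^\nu)$, Laplace-transform the system (\ref{pr0}), substitute $s=z^\nu$, multiply by $z^{\nu-1}$, and invert via (\ref{frderlap}). The additional care you take with the initial conditions, Tonelli, and uniqueness of the Laplace inversion is welcome but does not change the underlying argument.
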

	\begin{proof}
		For $n\ge0$ and $t\ge0$, from (\ref{timec1}), we have
		\begin{equation}\label{1}
			p^\nu(n,t)=\mathrm{Pr}\{N(L_\nu(t))=n\}=\int_{0}^{\infty}p(n,x)\mathrm{Pr}\{L_\nu(t)\in\mathrm{d}x\}.
		\end{equation}
		 Let $\tilde{p}(n,z)=\int_{0}^{\infty}e^{-zt}p(n,t)\,\mathrm{d}t$ be the Laplace transform of state probabilities of LBDPwI. Then, on taking the Laplace transform on both sides of (\ref{1}) and using (\ref{subdenlap1}), we have
		\begin{align}
			\tilde{p}^\nu(n,z)
			&=\int_{0}^{\infty}p(n,x)\left(\int_{0}^{\infty}e^{-zt}\mathrm{Pr}\{L_\nu(t)\in\mathrm{d}x\}\,\mathrm{d}t\right)\,\mathrm{d}x\nonumber\\
			&=z^{\nu-1}\int_{0}^{\infty}p(n,x)e^{-xz^\nu}\,\mathrm{d}x=z^{\nu-1}\tilde{p}(n,z^\nu),\ n\ge0.\label{lap1}
		\end{align}		 
		 On taking the Laplace transform on both sides of (\ref{pr0}), we get
		\begin{equation*}
			z\tilde{p}(n,z)-p(n,0)=-(\lambda_n+\mu_n)\tilde{p}(n,z)+\lambda_{n-1}\tilde{p}(n-1,z)+\mu_{n+1}\tilde{p}(n+1,z),\ n\ge0,
		\end{equation*}
		where $\lambda_0=\alpha,\ \lambda_n=n\lambda,\ \mu_n=n\mu$ for all $n\ge1$ and $\mu_0=\lambda_{-1}=0$. So, 
		\begin{multline}\label{a}
			z^{2\nu-1}\tilde{p}(n,z^\nu)-z^{\nu-1}p(n,0)\\=-z^{\nu-1}(\lambda_n+\mu_n)\tilde{p}(n,z^\nu)+z^{\nu-1}\lambda_{n-1}\tilde{p}(n-1,z^\nu)+z^{\nu-1}\mu_{n+1}\tilde{p}(n+1,z^\nu),\ n\ge0.
		\end{multline}
		On substituting (\ref{lap1}) in (\ref{a}) and using $p^\nu(n,0)=p(n,0)$ for all $n\ge1$, we have
		\begin{equation*}
		{z^\nu\tilde{p}^\nu(n,z)-z^{\nu-1}p^\nu(n,0)}=-(\lambda_n+\mu_n)\tilde{p}^\nu(n,z)+\lambda_{n-1}\tilde{p}^\nu(n-1,z)+\mu_{n+1}\tilde{p}^\nu(n+1,z),
		\end{equation*}
		 which on taking the inverse Laplace transform and using (\ref{frderlap}) yields
		\begin{equation*}
			\frac{\mathrm{d}^\nu }{\mathrm{d}t^\nu}p^\nu(n,t)=-(\lambda_n+\mu_n)p^\nu(n,t)+\lambda_{n-1}p^\nu(n-1,t)+\mu_{n+1}p^\nu(n+1,t),\ n\ge0.
		\end{equation*}
		This completes the proof.
	\end{proof}

	\begin{remark}\label{thm1}
			Let $\{T_{2\nu}(t)\}_{t\ge0}$ be a random process whose distribution function is the folded solution of the following fractional diffusion equation:
		\begin{equation*}
			\frac{\partial^{2\nu}g(x,t)}{\partial t^{2\nu}}=\frac{\partial^2g(x,t)}{\partial x^2},\ 0<\nu\leq1,\ x\in\mathbb{R}
		\end{equation*}
		with $g(x,0)=\delta(x),\ 0<\nu\leq1$ and ${\partial g(x,t)}/{\partial t}\big|_{t=0}=0,\ 1/2<\nu\leq1.$ Then, from Theorem 3.1 in \cite{Meerschaert2011}, it follows that $L_\nu(t)\overset{d}{=}T_{2\nu}(t)$ for $0<\nu<1$, where $\overset{d}{=}$ denotes the equality in distribution. Thus, the LBDPwI and TFLBDPwI satisfies the following time-changed relationship:
		\begin{equation*}
			N^\nu(t)\overset{d}{=}N(T_{2\nu}(t)),
		\end{equation*}
		where $N(t)$ is independent of $T_{2\nu}(t)$. A similar relationship holds for some other fractional processes, for example, the fractional Poisson process (see \cite{Meerschaert2011}), the fractional birth process (see \cite{Orsingher2010}) and the fractional linear birth-death process (see \cite{Orsingher2011}). 	
	\end{remark}
	
\subsection{Statistical properties of TFLBDPwI}
In this section, we derive the expression for state probabilities of TFLBDPwI and study some of its distributional properties. Note that there are no nonlinear terms involved in the system of differential equations corresponding to the TFLBDPwI. Thus, the ADM can be used to solve (\ref{frdiffequ}) to obtain the state probabilities of TFLBDPwI. First, we recall the definition of fractional integral and one related result.

\begin{definition}
	Let $f(\cdot)$ be an integrable function. Then, the Riemann-Liouville fractional integral is defined as follows (see \cite{Kilbas2006}):
	\begin{equation*}
		I_t^\nu(f(t))=\frac{1}{\Gamma(\nu)}\int_{0}^{t}\frac{f(s)\,\mathrm{d}s}{(t-s)^{1-\nu}},\ \nu>0,
	\end{equation*}
	where $I_t^\nu$ denotes the fractional integral operator of order $\nu$.
\end{definition} 
	For $\delta>0$, we have the following integral:
	\begin{equation}\label{lemma1}
	I_t^\nu(t^{\delta-1})=\frac{\Gamma(\delta)t^{\delta+\nu-1}}{\Gamma(\delta+\nu)},\ \nu>0.
	\end{equation}

Note that the Riemann-Liouville integral operator $I_t^\nu$, $\nu>0$ is linear. So, the Adomian polynomial $A_n$'s associated with it are given by $A_n(x_0(t),x_1(t),\dots,x_n(t))=I_t^\nu(x_n(t))$.

 Now, on applying $I_t^\nu$ on both sides of (\ref{fbpwiequ}) and substituting $p_{0}^\nu(n,t)=\sum_{k=0}^{\infty}p_{k}^\nu(n,t)$ for all $n\ge0$, we get
{\footnotesize\begin{equation}\label{tflbdpwiadm}
	\sum_{k=0}^{\infty}p_{k}^\nu(n,t)=\begin{cases}
		p^\nu(0,0)+\sum_{k=0}^{\infty}I_t^\nu(-\alpha p_{k}^\nu(0,t)+\mu p_{k}^\nu(1,t)),\ n=0,\vspace{0.1cm}\\
		p^\nu(1,0)+\sum_{k=0}^{\infty}I_t^\nu(-(\lambda+\mu)p_{k}^\nu(1,t)+\alpha p_{k}^\nu(0,t)+2\mu p_{k}^\nu(2,t)),\ n=1,\vspace{0.1cm}\\
		p^\nu(n,0)+\sum_{k=0}^{\infty}I_t^\nu(-n(\lambda+\mu)p_{k}^\nu(n,t)+(n-1)\lambda p_{k}^\nu(n-1,t)+(n+1)\mu p_{k}^\nu(n+1,t)),\ n\ge2.
	\end{cases}
\end{equation}}
By using ADM, we have
\begin{equation}\label{adm1}
	p_{0}^\nu(n,t)=p^\nu(n,0)=\begin{cases}
		1,\ n=0,\\
		0,\ n\ge1
	\end{cases}
\end{equation}
and for $k\ge1$
{\small\begin{equation}\label{adm2}
	p_{k}^\nu(n,t)=\begin{cases}
		I_t^\nu(-\alpha p_{k-1}^\nu(0,t)+\mu p_{k-1}^\nu(1,t)),\ n=0,\vspace{0.1cm}\\
		I_t^\nu(-(\lambda+\mu)p_{k-1}^\nu(1,t)+\alpha p_{k-1}^\nu(0,t)+2\mu p_{k-1}^\nu(2,t)),\ n=1,\vspace{0.1cm}\\
		I_t^\nu(-n(\lambda+\mu)p_{k-1}^\nu(n,t)+(n-1)\lambda p_{k-1}^\nu(n-1,t)+(n+1)\mu p_{k-1}^\nu(n+1,t)),\ n\ge2.
	\end{cases}
\end{equation}}
The following result provides the condition under which the series components of the state probabilities of TFLBDPwI vanish. Its proof follows by using the method of induction. A similar result  holds in the case of fractional Poisson random fields on positive plane studied in \cite{Kataria2024}. However, in this case, we deal with only one fractional integral operator, whereas their we have two fractional integral operators of different orders. Also, the system of differential equations appearing here is different from there.
\begin{proposition}\label{propadm}
	Let $p_{k}^\nu(n,t)$, $k\ge0$, $n\ge0$ be the series components as defined in (\ref{adm2}). Then, for $t\ge0$, we have $p_{k}^\nu(n,t)=0$ for all $n\ge k+1$.
\end{proposition}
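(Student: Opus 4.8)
The plan is to prove the statement by induction on $k$, exploiting the nearest-neighbour (tridiagonal) structure of the recurrence (\ref{adm2}): the $k$th series component $p_{k}^\nu(n,t)$ at level $n$ is obtained, by applying the linear operator $I_t^\nu$, only to a linear combination of the $(k-1)$th components at the three neighbouring levels $n-1$, $n$ and $n+1$. Consequently, whatever vanishing region the $(k-1)$th components enjoy propagates, shifted by one, to the $k$th components.

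First I would dispose of the base case $k=0$. By (\ref{adm1}) we have $p_{0}^\nu(0,t)=1$ and $p_{0}^\nu(n,t)=0$ for every $n\ge1$; since here $k+1=1$, this is precisely the asserted statement $p_{0}^\nu(n,t)=0$ for all $n\ge k+1$.

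For the inductive step, assume the claim holds at level $k-1$, i.e.\ $p_{k-1}^\nu(m,t)=0$ for all $m\ge k$, and fix an arbitrary $n\ge k+1$. Since $k\ge1$, this forces $n\ge2$, so only the third branch of (\ref{adm2}) is in play, and $p_{k}^\nu(n,t)=I_t^\nu\!\big({-}n(\lambda+\mu)p_{k-1}^\nu(n,t)+(n-1)\lambda p_{k-1}^\nu(n-1,t)+(n+1)\mu p_{k-1}^\nu(n+1,t)\big)$. The smallest index occurring on the right-hand side is $n-1$, and $n-1\ge k$ by the choice of $n$; hence the induction hypothesis makes each of $p_{k-1}^\nu(n-1,t)$, $p_{k-1}^\nu(n,t)$, $p_{k-1}^\nu(n+1,t)$ equal to zero, so the integrand vanishes identically and $p_{k}^\nu(n,t)=I_t^\nu(0)=0$. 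This closes the induction.

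I do not expect a genuine obstacle here; the one point requiring a little care is the index bookkeeping, namely verifying that the threshold $n\ge k+1$ is exactly what is needed to push all three indices $n-1,n,n+1$ into the range $\ge k$ where the previous-level components are already known to vanish, and that for $k\ge1$ and $n\ge k+1$ one is always in the regime $n\ge2$, so that the $n=0$ and $n=1$ branches of (\ref{adm2}) never intervene in the inductive step.
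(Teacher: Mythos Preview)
Your proof is correct and follows essentially the same inductive approach as the paper: induction on $k$ using the tridiagonal structure of the recurrence (\ref{adm2}). The only difference is cosmetic---the paper rephrases the claim as $p_{k}^\nu(n+k,t)=0$ for $n\ge1$ and additionally verifies the case $k=1$ separately before the general inductive step, whereas your argument handles all $k\ge1$ at once; your version is in fact slightly more streamlined.
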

\begin{proof}
	It is sufficient to show that $	p_{k}^\nu(n+k,t)=0$,
	for all $n\ge1$ and $k\ge0$. From (\ref{adm1}), we note that the result holds for $k=0$ and for any $n\geq1$. 
	
	For $k=n=1$, from (\ref{adm2}), we get
	\begin{equation*}
		\begin{split}            p_{1}^\nu(2,t)&=I_t^\nu(-2(\lambda+\mu)p_{0}^\nu(2,t)+\lambda p_{0}^\nu(1,t)+3\mu p_{0}^\nu(3,t))=I_t^\nu0=0,      
		\end{split}
	\end{equation*}
	where we have used (\ref{adm1}). So, the result holds for $n=k=1$. 
	
	Let $k=1$ and $n\ge1$. From (\ref{adm2}), we have
	\begin{equation*}
		p_{1}^\nu(n+1,t)=I_t^\nu(-(n+1)(\lambda+\mu)p_{0}^\nu(n+1,t)+n\lambda p_{0}^\nu(n,t)+(n+2)\mu p_{0}^\nu(n+2,t))=0.
	\end{equation*}
	Thus, the assertion holds for  $k=1$ and for any $n\ge1$ also. 
	
	Now, suppose  the result holds for some $k\ge2$ and for all $n\ge1$, that is, $p_k(n+k,t)=0,\ n\ge1$. Then, from (\ref{adm2}), we have
	\begin{equation*}
		\begin{split}
			p_{k+1}^\nu(n+k+1,t)&=I_t^\nu(-(n+k+1)(\lambda+\mu)p_{k}^\nu(n+k+1,t)+(n+k)\lambda p_{k}^\nu(n+k,t)\\
			&\ \ +(n+k+2)\mu p_{k}^\nu(n+k+2,t))=I_t^\nu0=0,\\           
		\end{split}
	\end{equation*}
	where we have used the induction hypothesis. This completes the proof using method of induction.
\end{proof}

Now, we explicitly derive some of the series components. From (\ref{adm2}), we have $p_{1}^\nu(0,t)=I_t^\nu(-\alpha p^\nu(0,t))$. By using (\ref{lemma1}) and (\ref{adm1}), we get $p_{1}^\nu(0,t)=-\alpha I_t^\nu(t^0)=-\alpha t^\nu/\Gamma(\nu+1)$.  

In view of Proposition \ref{propadm}, from \ref{adm2}, we have
\begin{equation}\label{1.1}
	p_{1}^\nu(1,t)=I_t^\nu(\alpha p_{0}^\nu(0,t))=\frac{\alpha t^\nu}{\Gamma(\nu+1)},
\end{equation} 
where we have used (\ref{lemma1}).
Similarly, we have
{\small\begin{align*}
	p_{2}^\nu(0,t)&=I_t^\nu(-\alpha p_{1}^\nu(0,t)+\mu p_{1}^\nu(1,t))=\frac{\alpha(\alpha+\mu)t^{2\nu}}{\Gamma(2\nu+1)},\\
	p_{2}^\nu(1,t)&=I_t^\nu(-(\lambda+\mu)p_{1}^\nu(1,t)+\alpha p_{1}^\nu(0,t)+2\mu p_{1}^\nu(2,t))=-\alpha(\alpha+\lambda+\mu)\frac{t^{2\nu}}{\Gamma(2\nu+1)},\\
	p_{3}^\nu(0,t)&=I_t^\nu(-\alpha p_{2}^\nu(0,t)+\mu p_{2}^\nu(1,t))=-\alpha(\alpha^2+2\alpha\mu+\lambda\mu+\mu^2)\frac{t^{3\nu}}{\Gamma(3\nu+1)},\\
	p_{2}^\nu(2,t)&=I_t^\nu(\lambda p_{1}^\nu(1,t))=\frac{\alpha\lambda t^{2\nu}}{\Gamma(2\nu+1)},\\
	p_{3}^\nu(1,t)&=I_t^\nu(-(\lambda+\mu)p_{2}^\nu(1,t)+\alpha p_{2}^\nu(0,t)+2\mu p_{2}^\nu(2,t))=\alpha(\alpha^2+\alpha\lambda+2\alpha\mu+2\lambda\mu+(\lambda+\mu)^2)\frac{t^{3\nu}}{\Gamma(3\nu+1)},\\
	p_{4}^\nu(0,t)&=I_t^\nu(-\alpha p_{3}^\nu(0,t)+\mu p_{3}^\nu(1,t))=\alpha(\alpha^3+3\alpha^2\mu+2\alpha\lambda\mu+3\alpha\mu^2+2\lambda\mu^2+(\lambda+\mu)^2\mu)\frac{t^{4\nu}}{\Gamma(4\nu+1)}.
\end{align*}}

Thus, for all $n\ge0$ and $k\ge0$, we can derive the series components $p^\nu_k(n,t)$ of the state probabilities of TFLBDPwI, recursively. However, for arbitrary $\alpha$, $\lambda$ and $\mu$, it is not possible here to determine the closed form of these series components. So, for the computation purpose, we only consider the case where it is possible to do so.
\begin{theorem}\label{thmadm}
	For $\alpha=\lambda=\mu$ and $n\ge0$, the series components of the state probabilities of TFLBDPwI are given by
	\begin{equation}\label{p0adm}
		p_{k}^\nu(n,t)=\begin{cases}
			\frac{(-1)^{k-n}c_{n,k}(\lambda t^\nu)^k}{\Gamma(k\nu+1)},\,k\ge n,\vspace{0.15cm}\\
			0,\ k<n,
		\end{cases}
	\end{equation}
	where $c_{0,0}=1$, $c_{0,k+1}=c_{0,k}+c_{1,k}$, $c_{1,k+1}=c_{0,k}+2(c_{1,k}+c_{2,k})$ and $c_{n,k+1}=(n-1)c_{n-1,k}+2nc_{n,k}+(n+1)c_{n+1,k}$ such that $c_{n,k}=0$ whenever $n>k$ for all $n\ge0$ and $k\ge0$.
\end{theorem}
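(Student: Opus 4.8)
The plan is to prove (\ref{p0adm}) by induction on $k$, using the recurrence (\ref{adm2}) together with the identity (\ref{lemma1}) for the Riemann--Liouville integral of a power of $t$. Since we are in the special case $\alpha=\lambda=\mu$, the three cases of (\ref{adm2}) collapse into a single uniform recursion whose coefficients are exactly the integers $c_{n,k}$ defined in the statement, and whose $t$-dependence is governed by $I_t^\nu\!\bigl(t^{k\nu}\bigr)=\Gamma(k\nu+1)t^{(k+1)\nu}/\Gamma((k+1)\nu+1)$, a direct consequence of (\ref{lemma1}) with $\delta=k\nu+1$. The base case $k=0$ is just (\ref{adm1}), which gives $p_0^\nu(0,t)=1$ (matching $c_{0,0}=1$) and $p_0^\nu(n,t)=0$ for $n\ge1$ (matching the convention $c_{n,0}=0$ for $n>0$). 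Proposition \ref{propadm} takes care of the vanishing for $k<n$, so the only content is the formula for $k\ge n$.

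For the inductive step, I would assume (\ref{p0adm}) holds at level $k$ and plug the three relevant components $p_k^\nu(n-1,t)$, $p_k^\nu(n,t)$, $p_k^\nu(n+1,t)$ into the right-hand side of (\ref{adm2}) at level $k+1$. With $\alpha=\lambda=\mu$, the $n\ge2$ line of (\ref{adm2}) reads $p_{k+1}^\nu(n,t)=I_t^\nu\bigl(\lambda\bigl[-2n\,p_k^\nu(n,t)+(n-1)p_k^\nu(n-1,t)+(n+1)p_k^\nu(n+1,t)\bigr]\bigr)$; substituting the inductive formula, pulling the constant $(\lambda t^\nu)^k/\Gamma(k\nu+1)$ out of $I_t^\nu$, and applying (\ref{lemma1}), the $\Gamma(k\nu+1)$ cancels and I am left with $(\lambda t^\nu)^{k+1}/\Gamma((k+1)\nu+1)$ times a sign times $\bigl[(n-1)c_{n-1,k}+2n\,c_{n,k}+(n+1)c_{n+1,k}\bigr]$. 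Checking that the sign works out to $(-1)^{(k+1)-n}$: the three terms carry signs $(-1)^{k-(n-1)}$, $(-1)^{k-n}$ (from $-2n\cdot(-1)^{k-n}$ this is actually $(-1)^{k-n+1}$), and $(-1)^{k-(n+1)}$, all of which equal $(-1)^{k-n+1}=(-1)^{(k+1)-n}$, so the common sign factors out cleanly and the bracket is precisely $c_{n,k+1}$ by its defining recurrence. The cases $n=0$ and $n=1$ must be handled separately because their lines in (\ref{adm2}) have the anomalous coefficients ($-\alpha$, $\mu$ for $n=0$; $-(\lambda+\mu)$, $\alpha$, $2\mu$ for $n=1$), but after setting $\alpha=\lambda=\mu$ these reduce to exactly the boundary recurrences $c_{0,k+1}=c_{0,k}+c_{1,k}$ and $c_{1,k+1}=c_{0,k}+2(c_{1,k}+c_{2,k})$ stated in the theorem, with the same sign bookkeeping.

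I do not expect a serious obstacle here; the argument is a routine two-layer induction (the outer on $k$, with the three boundary/bulk cases treated in parallel). The one place that requires a little care is the sign tracking in the inductive step --- making sure that the factor $-2n$ in the bulk recursion flips the sign of the middle term so that all three contributions share the common sign $(-1)^{(k+1)-n}$, and separately checking that at the boundary $n=0,1$ the surviving coefficients are all positive so the sign is simply $(-1)^{(k+1)-n}=(-1)^{k+1}$ or $(-1)^{k}$ as appropriate. A second minor bookkeeping point is ensuring that whenever the inductive formula is invoked for an index $n+k> $ the level, i.e. for a component that Proposition \ref{propadm} says vanishes, the matching coefficient $c_{n,k}$ is indeed zero by the convention $c_{n,k}=0$ for $n>k$ --- this guarantees the recursion for $c_{n,k}$ is consistent with the recursion for $p_k^\nu(n,t)$ even near the "diagonal" $n=k$. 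Once these compatibility checks are in place, the induction closes and (\ref{p0adm}) follows.
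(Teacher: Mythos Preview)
Your proposal is correct and follows essentially the same route as the paper: induction on $k$ with base case from (\ref{adm1}), invoking Proposition~\ref{propadm} for the vanishing range, and splitting the inductive step into the boundary cases $n=0$, $n=1$ and the bulk $n\ge2$, with (\ref{lemma1}) supplying the integral of $t^{k\nu}$. The only cosmetic difference is that the paper further subdivides the $n\ge2$ case according to whether $n-k$ equals $1$, $0$, or is negative, making explicit the diagonal behaviour that you instead absorb into the convention $c_{n,k}=0$ for $n>k$; your treatment is a bit more uniform but substantively identical.
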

\begin{proof}
	For $n\ge0$ and $k\ge0$ such that $n-k\ge1$, the result follows from Proposition \ref{propadm}. If $n=k=1$ then $c_{1,1}=c_{0,0}+2(c_{1,0}+c_{2,0})=1$. So, from (\ref{1.1}), the result holds for $n=k=1$. 
	
	Let us assume that (\ref{p0adm}) holds for $n\ge0$ and for some $k=m\ge2$. Then, we have the following three cases:\\	
	\textit{Case I.} For $n=0$ and $\alpha=\lambda=\mu$, from (\ref{adm2}), we have
	\begin{align*}
		p_{m+1}^\nu(0,t)&=I_t^\nu(-\lambda p_{m}^\nu(0,t)+\lambda p_{m}^\nu(1,t))\\
		&=I_t^\nu\bigg(-\lambda\frac{c_{m}(-\lambda t^\nu)^m}{\Gamma(m\nu+1)}+\lambda\frac{(-1)^{m-1}c_{1,m}(\lambda t^\nu)^{m}}{\Gamma(m\nu+1)}\bigg)\\
		&=(c_{0,m}+c_{1,m})\frac{(-\lambda t^\nu)^{m+1}}{\Gamma(\nu(m+1)+1)}=\frac{c_{0,m+1}(-\lambda t^\nu)^{m+1}}{\Gamma(\nu(m+1)+1)},
	\end{align*}
	where we have used induction hypothesis to get the second equality. Thus, the results holds for $n=0$ and $k=m+1$.\\	
	\textit{Case II.} Let $n=1$. Then, for $\alpha=\lambda=\mu$ in (\ref{adm2}), we have
	\begin{align*}
		p_{m+1}^\nu(1,t)&=I_t^\nu(-2\lambda p_{m}^\nu(1,t)+\lambda p_{m}^\nu(0,t)+2\lambda p_{m}^\nu(2,t))\\
		&=I_t^\nu\bigg(-2\lambda\frac{(-1)^{m-1}c_{1,m}(\lambda t^\nu)^{m}}{\Gamma(m\nu+1)}+\lambda\frac{c_{0,m}(-\lambda t^\nu)^m}{\Gamma(m\nu+1)}+2\lambda\frac{(-1)^{m-2}c_{2,m}(\lambda t^\nu)^m}{\Gamma(m\nu+1)}\bigg)\\
		&=(2c_{1,m}+c_{0,m}+2c_{2,m})\frac{(-1)^m(\lambda t^\nu)^{m+1}}{\Gamma(\nu(m+1)+1)}=\frac{c_{1,m+1}(-1)^m(\lambda t^\nu)^{m+1}}{\Gamma(\nu(m+1)+1)},
	\end{align*}
	where again the second equality follows from induction hypothesis. So, the result holds for $n=1$ and $k=m+1$.\\
	\textit{Case III.} If $n\ge2$ and $n-k\ge1$ then the result follows from Proposition \ref{propadm}. Now, suppose $n-m=1$. Then, $p_{m}^\nu(n,t)=p_{m}^\nu(n+1,t)=0$ and from (\ref{adm2}), we get
	\begin{equation*}
		p_{m+1}^\nu(n,t)=I_t^\nu(m\lambda p_{m}^\nu(m,t))=I_t^\nu\bigg(m\lambda\frac{c_{m,m}(\lambda t^\nu)^m}{\Gamma(m\nu+1)}\bigg)=\frac{c_{m+1,m+1}(\lambda t^\nu)^{m+1}}{\Gamma((m+1)\nu+1)},
	\end{equation*}
	where we have used $c_{m+1,m+1}=2(m+1)c_{m+1,m}+mc_{m,m}+(m+2)c_{m+2,m}=mc_{m,m}$ since $c_{m+2,m}=c_{m+1,m}=0$.
	
	If $n-m=0$ then $p_{m}^\nu(n+1,t)=0$. So,
	\begin{align*}
		p_{m+1}^\nu(n,t)&=I_t^\nu\bigg(-2m\lambda\frac{c_{m,m}(\lambda t^\nu)^m}{\Gamma(m\nu+1)}-(m-1)\lambda\frac{c_{m-1,m}(\lambda t^\nu)^m}{\Gamma(\nu m+1)}\bigg)\\
		&=-(2mc_{m,m}+(m-1)c_{m-1,m})\frac{(\lambda t^\nu)^{m+1}}{\Gamma((m+1)\nu+1)}=\frac{-c_{m,m+1}(\lambda t^\nu)^{m+1}}{\Gamma((m+1)\nu+1)},
	\end{align*}
	where we have used $c_{m+1,m}=0$ to get $c_{m,m+1}=2mc_{m,m}+(m-1)c_{m-1,m}$.
	
	Let $n-m<0$. Then, from (\ref{adm2}), we get
	\begin{align*}
		p_{m+1}^{\nu}(n,t)&=I_t^\nu(2n\lambda p_{m}^\nu(n,t)+(n-1)\lambda p_{m}^\nu(n-1,t)+(n+1)\lambda p_{m}^\nu(n+1,t))\\
		&=I_t^\nu\bigg(-2n\lambda \frac{(-1)^{m-n}c_{n,m}(\lambda t^\nu)^m}{\Gamma(m\nu+1)}+(n-1)\lambda\frac{(-1)^{m-n+1}c_{n-1,m}(\lambda t^\nu)^{m}}{\Gamma(m\nu+1)}\\
		&\ \ +(n+1)\lambda\frac{(-1)^{m-n-1}c_{n+1,m}(\lambda t^\nu)^{m}}{\Gamma(m\nu+1)}\bigg)\\
		&=(2nc_{n,m}+(n-1)c_{n-1,m}+(n+1)c_{n+1,m})\frac{(-1)^{m+1-n}(\lambda t^\nu)^{m+1}}{\Gamma((m+1)\nu+1)}.
	\end{align*}
	Thus, (\ref{p0adm}) holds for $n\ge2$ and $k=m+1$ too. This completes the proof by using the method of induction.
\end{proof}

On summing (\ref{p0adm}) over $k=0,1,2,\dots$, we get the following result.
\begin{theorem}
	For $\alpha=\lambda=\mu$,  state probabilities of the TFLBDPwI are given by
	\begin{equation}\label{tflbdpsp}
		p^\nu(n,t)=\sum_{k=n}^{\infty}\frac{(-1)^{k-n}c_{n,k}(\lambda t^\nu)^k}{\Gamma(k\nu+1)},\ n\ge0,
	\end{equation}
	where $p^\nu(0,t)$ is called the extinction probability.
\end{theorem}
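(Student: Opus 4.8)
The plan is to read the state probabilities straight off the Adomian decomposition, substituting the closed form of its components supplied by Theorem~\ref{thmadm}. The ADM posits $p^\nu(n,t)=\sum_{k=0}^{\infty}p_{k}^\nu(n,t)$, where the components are generated recursively by \eqref{adm1}--\eqref{adm2}. For $\alpha=\lambda=\mu$, Theorem~\ref{thmadm} gives them explicitly: $p_{k}^\nu(n,t)=(-1)^{k-n}c_{n,k}(\lambda t^\nu)^k/\Gamma(k\nu+1)$ for $k\ge n$, and $p_{k}^\nu(n,t)=0$ for $k<n$ (the latter being exactly Proposition~\ref{propadm}). Plugging this into the decomposition and dropping the vanishing terms with $k<n$ collapses the sum over $k\ge0$ to a sum over $k\ge n$, which is precisely \eqref{tflbdpsp}. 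So the statement is obtained simply by summing \eqref{p0adm} over $k$.

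The one point that genuinely needs care is the legitimacy of the ADM series, i.e.\ that $\sum_{k}p_{k}^\nu(n,t)$ converges for each fixed $t$ and equals the true state probability $p^\nu(n,t)$ solving \eqref{frdiffequ}. I would settle this by a direct verification: apply the Caputo operator term by term to the right-hand side of \eqref{tflbdpsp}, using $\frac{\mathrm d^\nu}{\mathrm dt^\nu}\frac{t^{k\nu}}{\Gamma(k\nu+1)}=\frac{t^{(k-1)\nu}}{\Gamma((k-1)\nu+1)}$ for $k\ge1$ (the counterpart of \eqref{lemma1}), shift the summation index $k\mapsto k-1$, and check that the defining recurrences $c_{0,k+1}=c_{0,k}+c_{1,k}$, $c_{1,k+1}=c_{0,k}+2(c_{1,k}+c_{2,k})$, $c_{n,k+1}=(n-1)c_{n-1,k}+2nc_{n,k}+(n+1)c_{n+1,k}$ force the coefficients on the two sides of \eqref{frdiffequ} to agree for every $n\ge0$. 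Together with the initial values $p^\nu(n,0)=\delta_{n,0}$, immediate from \eqref{tflbdpsp}, and uniqueness of solutions to \eqref{frdiffequ}, this confirms that the series does produce the state probabilities. I would also record the normalization $\sum_{n\ge0}p^\nu(n,t)=1$: summing \eqref{p0adm} over $n$ for fixed $k$ and using the $c_{n,k}$ recurrences shows $\sum_{n}(-1)^{k-n}c_{n,k}=0$ for $k\ge1$, so $\sum_{n}p_{k}^\nu(n,t)=\delta_{k,0}$ and the double sum telescopes to $1$.

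I expect the main obstacle to be the bookkeeping in this index-shift and coefficient-matching step, chiefly because the $n=0$ and $n=1$ rows of \eqref{frdiffequ} are anomalous (the immigration term $\alpha p^\nu(0,t)$ replacing the generic $(n-1)\lambda p^\nu(n-1,t)$), which is exactly why $c_{0,k}$ and $c_{1,k}$ obey non-uniform recurrences and must be handled separately from the generic $n\ge2$ case; a naive term-by-term absolute-convergence bound is delicate here, so the cleaner route is the direct substitution-plus-uniqueness argument. Finally, the remark that $p^\nu(0,t)$ is the extinction probability needs no proof: state $0$ of $\{N^\nu(t)\}_{t\ge0}$ is the empty-population state (into which immigration re-injects mass), so $p^\nu(0,t)=\mathrm{Pr}\{N^\nu(t)=0\}$ is indeed the probability that the time-changed population is extinct at time $t$.
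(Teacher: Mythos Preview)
Your approach is essentially the paper's: the paper's entire argument is the single line ``on summing \eqref{p0adm} over $k=0,1,2,\dots$,'' with no further justification. Your additional direct-substitution verification of \eqref{frdiffequ} via the $c_{n,k}$ recurrences, together with the uniqueness appeal, is extra rigor that the paper does not supply; your normalization check $\sum_{n}(-1)^{k-n}c_{n,k}=0$ for $k\ge1$ is exactly what the paper records separately in the Remark immediately following the theorem.
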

\begin{remark}
	The regularity of state probabilities (\ref{tflbdpsp}) can be checked as follows:
	\begin{align*}
		\sum_{n=0}^{\infty}p^\nu(n,t)&=\sum_{n=0}^{\infty}\sum_{k=n}^{\infty}\frac{(-1)^{k-n}c_{n,k}(\lambda t^\nu)^k}{\Gamma(k\nu+1)}\\
		&=\sum_{k=0}^{\infty}\frac{c_{0,k}(-\lambda t^\nu)^k}{\Gamma(k\nu+1)}+\sum_{k=1}^{\infty}\frac{(-1)^{k-1}c_{1,k}(\lambda t^\nu)^k}{\Gamma(k\nu+1)}+\sum_{n=2}^{\infty}\sum_{k=n}^{\infty}\frac{(-1)^{k-n}c_{n,k}(\lambda t^\nu)^k}{\Gamma(k\nu+1)}\\
		&=\sum_{k=0}^{\infty}\frac{c_{0,k}(-\lambda t^\nu)^k}{\Gamma(k\nu+1)}+\sum_{k=1}^{\infty}\frac{(-1)^{k-1}c_{1,k}(\lambda t^\nu)^k}{\Gamma(k\nu+1)}+\sum_{k=2}^{\infty}\frac{(\lambda t^\nu)^k}{\Gamma(k\nu+1)}\sum_{n=2}^{k}(-1)^{k-n}c_{n,k}\\
		&=1+\sum_{k=2}^{\infty}\frac{(\lambda t^\nu)^k}{\Gamma(k\nu+1)}\sum_{n=0}^{k}(-1)^{k-n}c_{n,k}=1,
	\end{align*}
	where we have used $c_{0,0}=c_{0,1}=c_{1,1}=1$ and the recurrence relation among the coefficients given in Theorem \ref{thmadm} to show
	\begin{align*}
		\sum_{n=0}^{k}(-1)^{k-n}c_{n,k}
		&=(-1)^kc_{0,k}-(-1)^kc_{1,k}+\sum_{n=2}^{k-1}(-1)^{k-n}2nc_{n,k-1}\\
		&\ \ +\sum_{n=2}^{k}(-1)^{k-n}(n-1)c_{n-1,k-1}+\sum_{n=2}^{k-2}(-1)^{k-n}(n+1)c_{n+1,k-1}\\
		&=(-1)^kc_{0,k}-(-1)^kc_{1,k}+(-1)^{k-2}4c_{2,k-1}-\sum_{n=2}^{k-2}(-1)^{k-n}2(n+1)c_{n+1,k-1}\\
		&\ \ +(-1)^kc_{1,k-1}-(-1)^k2c_{2,k-1}+2\sum_{n=2}^{k-2}(-1)^{n-k}(n+1)c_{n+1,k-1}\\
		&=(-1)^k(c_{0,k-1}+c_{1,k-1})-(-1)^k(c_{0,k-1}+2c_{1,k-1}+2c_{2,k-1})\\
		&\ \ +(-1)^kc_{1,k-1}+(-1)^k2c_{2,k-1}=0,\ k\ge2.
	\end{align*}
\end{remark}

Note that for $\alpha=\lambda=\mu$, the pgf of TFLBDPwI solves the following fractional differential equation:
\begin{equation}\label{frequpgf}
	\frac{\partial^\nu}{\partial t^\nu}G^\nu(u,t)=\lambda(u-1)^2\frac{\partial}{\partial u}G^\nu(u,t)+\lambda(u-1)p^\nu(0,t),
\end{equation}
with initial condition $G^\nu(u,0)=1$. So, on taking the derivative with respect to $u$ on both sides of (\ref{frequpgf}) and substituting $u=1$, we get the governing differential equation for its mean $\mathbb{E}N^\nu(t)=(\partial/\partial u)G^\nu(u,t)|_{u=1}$ as follows:
\begin{equation}\label{remean}
	\frac{\mathrm{d}^\nu}{\mathrm{d}t^\nu}\mathbb{E}N^\nu(t)=\lambda p^\nu(0,t)=\lambda\sum_{k=0}^{\infty}\frac{c_{0,k}(-\lambda t^\nu)^k}{\Gamma(k\nu+1)},
\end{equation}
with $\mathbb{E}N^\nu(0)=0$.
On taking the Laplace transform on both sides of (\ref{remean}) and using (\ref{frderlap}), we get
\begin{equation*}
	z^\nu\int_{0}^{\infty}e^{-zt}\mathbb{E}N^\nu(t)\,\mathrm{d}t-z^{\nu-1}\mathbb{E}N^\nu(0)=\lambda\sum_{k=0}^{\infty}\frac{c_{0,k}(-\lambda)^k}{z^{k\nu+1}}.
\end{equation*}
So,
\begin{equation}\label{remeanlap}
	\int_{0}^{\infty}e^{-zt}\mathbb{E}N^\nu(t)\,\mathrm{d}t=\lambda\sum_{k=0}^{\infty}\frac{c_{0,k}(-\lambda)^k}{z^{(k+1)\nu+1}},\ z>0,
\end{equation}
whose inversion yields 
\begin{equation*}
	\mathbb{E}N^\nu(t)=-\sum_{k=0}^{\infty}\frac{c_{0,k}(-\lambda t^\nu)^{k+1}}{\Gamma((k+1)\nu+1)},\ t\ge0.
\end{equation*}
Now, on taking the derivative twice with respect to $u$ on both sides of (\ref{frequpgf}) and substituting $u=1$, we have
\begin{equation*}
	\frac{\mathrm{d}^\nu}{\mathrm{d}t^\nu}\mathbb{E}N^\nu(t)(N^\nu(t)-1)=2\lambda\mathbb{E}N^\nu(t),
\end{equation*}
with $\mathbb{E}N^\nu(0)(N^\nu(0)-1)=0$, where $\mathbb{E}N^\nu(t)(N^\nu(t)-1)=(\partial^2/\partial u^2)G^\nu(u,t)|_{u=1}$ is the second factorial moment of TFLBDPwI. By using (\ref{frderlap}) and (\ref{remeanlap}), its Laplace transform is given by
\begin{equation*}
	\int_{0}^{\infty}e^{-zt}\mathbb{E}N^\nu(t)(N^\nu(t)-1)\,\mathrm{d}t=2\lambda^2\sum_{k=0}^{\infty}\frac{c_{0,k}(-\lambda)^k}{z^{(k+2)\nu+1}},\ z>0,
\end{equation*}
whose inversion yields
\begin{equation*}
	\mathbb{E}N^\nu(t)(N^\nu(t)-1)=2\sum_{k=0}^{\infty}\frac{c_{0,k}(-\lambda t^\nu)^{k+2}}{\Gamma((k+2)\nu+1)},\ t\ge0.
\end{equation*}
Thus, in the case of $\alpha=\lambda=\mu$, the variance of TFLBDPwI is
\begin{equation*}
	\mathbb{V}\mathrm{ar}N^\nu(t)=\sum_{k=0}^{\infty}c_{0,k}(-\lambda t^\nu)^{k+1}\bigg(-\frac{2\lambda t^\nu}{\Gamma((k+2)\nu+1)}-\frac{1}{\Gamma((k+1)\nu+1)}\bigg)-(\mathbb{E}N^\nu(t))^2,\ t\ge0.
\end{equation*}
\begin{remark}
	Alternatively, for $\alpha=\lambda=\mu$, the mean value of TFLBDPwI can be derived as follows:
	\begin{align}
		\mathbb{E}N^\nu(t)&=\sum_{n=0}^{\infty}n\sum_{k=n}^{\infty}\frac{(-1)^{k-n}c_{n,k}(\lambda t^\nu)^k}{\Gamma(k\nu+1)}\nonumber\\
		&=\sum_{k=1}^{\infty}\frac{(\lambda t^\nu)^k}{\Gamma(k\nu+1)}\sum_{n=1}^{k}(-1)^{k-n}nc_{n,k}\nonumber\\
		&=\sum_{k=1}^{\infty}\frac{(\lambda t^\nu)^k}{\Gamma(k\nu+1)}\bigg(-(-1)^kc_{1,k}++\sum_{n=2}^{k-1}(-1)^{k-n}2n^2c_{n,k-1}\nonumber\\
		&\ \ +\sum_{n=2}^{k}(-1)^{k-n}n(n-1)c_{n-1,k-1}+\sum_{n=2}^{k-2}(-1)^{k-n}n(n+1)c_{n+1,k-1}\bigg),\label{meanadm}
	\end{align}
	where we have used the recurrence relation of the coefficients mentioned in Theorem \ref{thmadm}. Now, on substituting $n(n-1)=(n-1)^2+(n-1)$ and $n(n+1)=(n+1)^2-(n+1)$ in (\ref{meanadm}), we get
	\begin{align*}
		\mathbb{E}N^\nu(t)&=\sum_{k=1}^{\infty}\frac{(\lambda t^\nu)^k}{\Gamma(k\nu+1)}\bigg(-(-1)^kc_{1,k}+\sum_{n=2}^{k-1}(-1)^{k-n}2n^2c_{n,k-1}\\
		&\ \ +\sum_{n=2}^{k}(-1)^{k-n}(n-1)^2c_{n-1,k-1}+\sum_{n=2}^{k}(-1)^{k-n}(n-1)c_{n-1,k-1}\\
		&\ \ +\sum_{n=2}^{k-2}(-1)^{k-n}(n+1)^2c_{n+1,k-1}-\sum_{n=2}^{k-2}(-1)^{k-n}(n+1)c_{n+1,k-1}\bigg)\\
		&=\sum_{k=1}^{\infty}\frac{(\lambda t^\nu)^k}{\Gamma(k\nu+1)}\bigg(-(-1)^kc_{1,k}+\sum_{n=2}^{k-1}(-1)^{k-n}2n^2c_{n,k-1}\\
		&\ \ -\sum_{n=1}^{k-1}(-1)^{k-n}n^2c_{n,k-1}-\sum_{n=1}^{k-1}(-1)^{k-n}nc_{n,k-1}\\
		&\ \ -\sum_{n=3}^{k-1}(-1)^{k-n}n^2c_{n,k-1}+\sum_{n=3}^{k-1}(-1)^{k-n}nc_{n,k-1}\bigg)\\
		&=\sum_{k=1}^{\infty}\frac{(\lambda t^\nu)^k}{\Gamma(k\nu+1)}(-(-1)^kc_{0,k-1}-(-1)^k2(c_{1,k-1}+c_{2,k-1})\\
		&\ \ +(-1)^{k}8c_{2,k-1}+(-1)^kc_{1,k-1}-(-1)^k4c_{2,k-1}+(-1)^kc_{1,k-1}-(-1)^k2c_{2,k-1})\\
		&=\sum_{k=1}^{\infty}\frac{-c_{0,k-1}(-\lambda t^\nu)^k}{\Gamma(k\nu+1)}.
	\end{align*}
	\end{remark}
	\begin{remark}
		Note that for each $n\ge0$ the coefficients $\{c_{n,k}\}_{k\ge0}$ of the series components is a very rapidly growing sequence of non-negative integers. These coefficients can be calculated by using the given recurrence relation. Moreover, they appear in a triangular array structure which is given as follows:
		\begin{equation*}
			\begin{array}{c|ccccccccccc}
				k& c_{0,k} & c_{1,k} &c_{2,k}&c_{3,k}&c_{4,k}&c_{5,k}&c_{6,k}&c_{7,k}&c_{8,k}&\cdots\\ \hline
				0 & 1 & 0 & 0&0&0&0&0&0&0&\dots \\
				1 & 1 & 1 & 0&0&0&0 &0&0&0&\dots\\
				2 & 2 & 3 & 1&0&0&0&0&0&0&\dots\\
				3&5&10&7&2&0&0&0&0&0&\dots\\
				4&15&39&44&26&6&0&0&0&0&\dots\\
				5&54&181&293&268&126&24&0&0&0&\dots\\
				6&235&1002&2157&2698&1932&744&120&0&0&\dots\\
				7&1237&6553&17724&28230&27270&15888&5160&720&0&\dots\\
				8&7790&49791&162139&313908&382290&298920&146400&41040&5040&\dots\\
				\vdots&\vdots&\vdots&\vdots&\vdots&\vdots&\vdots&\vdots&\vdots&\vdots
			\end{array}
		\end{equation*} 
	For $\lambda = 1$, $\eta = 1$ and $t \in \{0.4, 0.6, 0.8, 0.9, 1\}$, the growth of series components for $p^\nu(0,t)$, $p^\nu(1,t)$, $p^\nu(2,t)$, and $p^\nu(3,t)$ is illustrated in Figure \ref{fig1}. It can be observed that for $n = 0$, as $k$ increases, the series components approach zero, with the convergence rate decreasing as $t$ increases. Moreover, for higher $n$ values, the deviation from zero becomes larger as $t$ increases. Therefore, we conclude that to get a better approximation for the state probabilities for larger values of $n$ and $t$, we require a higher number of series components.
\end{remark}	
	\begin{figure}
		\includegraphics[width=16cm]{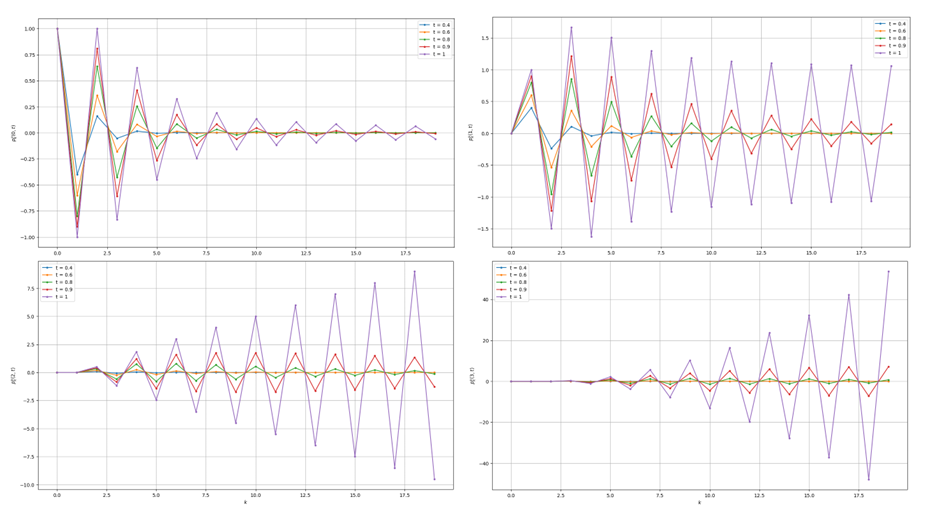}
		\caption{\small Growth of series components of the state probabilities for $\lambda=1$ and $\eta=1$.}\label{fig1}
	\end{figure}
	
	\section{Some particular cases of TFLBDPwI}
	In this section, we study the time-changed variants of some particular cases of LBDPwI, such as, the linear birth process with immigration and linear death immigration process. 
\subsection{Time-fractional linear birth process with immigration}
If we allow $\mu=0$ in LBDPwI then its time-changed version \textit{viz} TFLBDPwI reduces to a pure linear birth process with positive probability of immigration at state $n=0$. We call this process as the time-fractional linear birth process with immigration (TFLBPwI) and denote it by $\{\bar{N}^\nu(t)\}_{t\ge0}$, $0>\nu\leq1$. Its state probabilities $\bar{p}^\nu(n,t)=\mathrm{Pr}\{\bar{N}^\nu(t)=n\},\ n\ge0$ solve
\begin{equation}\label{fbpwiequ}
	\frac{\mathrm{d}^\nu }{\mathrm{d}t^\nu}\bar{p}^\nu(n,t)=\begin{cases}
	-\alpha \bar{p}^\nu(0,t),\ n=0,\vspace{0.1cm}\\
	-\lambda \bar{p}^\nu(1,t)+\alpha \bar{p}^\nu(0,t),\ n=1,\vspace{0.1cm}\\
	-n\lambda \bar{p}^\nu(n,t)+(n-1)\lambda \bar{p}^\nu(n-1,t),\ n\ge2.
\end{cases}
\end{equation}

Similar to the case of TFLBDPwI, we can use the ADM to obtain the state probabilities of TFLBPwI. Let $\bar{p}^\nu(n,t)=\sum_{k=0}^{\infty}\bar{p}_{k}^\nu(n,t)$ for all $n\ge0$. Then, $\bar{p}_k^\nu(n,t)=0$ whenever $n-k\ge1$. Moreover, if we allow $\mu=0$ then the non-zero series components $p_k^\nu(n,t)$ of TFLBDPwI will reduce to $\bar{p}_k^\nu (n,t)$ for all $n\ge0$ which are given as follows:
{\footnotesize\begin{align*}
	&\bar{p}^\nu_0(0,t)=1,\ \ \bar{p}_1^\nu(0,t)=-\frac{\alpha t^\nu}{\Gamma(\nu+1)},\ \ \bar{p}_1^\nu(1,t)=\frac{\alpha t^\nu}{\Gamma(\nu+1)},\ \ \bar{p}_2^\nu(0,t)=\frac{(\alpha t^\nu)^2}{\Gamma(2\nu+1)},\\
	&\bar{p}_2^\nu(1,t)=-\frac{\alpha(\alpha+\lambda)t^{2\nu}}{\Gamma(2\nu+1)},\ \ \bar{p}_3^\nu(0,t)=-\frac{(\alpha t^\nu)^3}{\Gamma(3\nu+1)},\ \ \bar{p}_2^\nu(2,t)=\frac{\alpha\lambda t^{2\nu}}{\Gamma(2\nu+1)},\ \ \bar{p}_3^\nu(1,t)=\frac{\alpha(\alpha^2+\alpha\lambda+\lambda^2)t^{3\nu}}{\Gamma(3\nu+1)},\dots.
\end{align*}}
\begin{theorem}
	For $t\ge0$, the series components of the state probabilities of TFLBPwI are given by
	\begin{equation}\label{tflbadm}
		\bar{p}_k^\nu(n,t)=\begin{cases}
			\frac{(-\alpha t^\nu)^k}{\Gamma(k\nu+1)},\ n=0,\ k\ge0,\vspace{0.1cm}\\
			\sum_{r=1}^{n}(-1)^{r+k}\binom{n-1}{r-1}\frac{\alpha((r\lambda)^{k}-\alpha^{k})t^{k\nu}}{(r\lambda-\alpha)\Gamma(k\nu+1)},\ n\ge1,\ k\ge n,\vspace{0.1cm}\\
			0,\ n-k\ge1.
		\end{cases}
	\end{equation}
\end{theorem}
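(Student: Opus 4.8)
The plan is to prove (\ref{tflbadm}) by induction on $k$, handled in three ranges of $n$. For $n-k\ge1$ there is nothing to do: this is exactly the vanishing of series components recorded just before the theorem (the $\mu=0$ analogue of Proposition \ref{propadm}). For $n=0$, applying the ADM to (\ref{fbpwiequ}) (equivalently, setting $\mu=0$ in (\ref{adm2})) gives $\bar p_k^\nu(0,t)=I_t^\nu(-\alpha\bar p_{k-1}^\nu(0,t))$, so using (\ref{lemma1}) with $\delta=(k-1)\nu+1$ a one-line induction yields $\bar p_k^\nu(0,t)=(-\alpha t^\nu)^k/\Gamma(k\nu+1)$, the first branch of (\ref{tflbadm}).

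The substance is the range $1\le n\le k$. First I would strip the time variable: since (\ref{lemma1}) shows that $I_t^\nu$ sends $c\,t^{(k-1)\nu}/\Gamma((k-1)\nu+1)$ to $c\,t^{k\nu}/\Gamma(k\nu+1)$, writing $\bar p_k^\nu(n,t)=b_k(n)\,t^{k\nu}/\Gamma(k\nu+1)$ converts the ADM recursion into the purely algebraic recursion
\[
b_0(0)=1,\quad b_0(n)=0\ (n\ge1),\quad b_k(0)=-\alpha b_{k-1}(0),\quad b_k(1)=-\lambda b_{k-1}(1)+\alpha b_{k-1}(0),
\]
together with $b_k(n)=-n\lambda b_{k-1}(n)+(n-1)\lambda b_{k-1}(n-1)$ for $n\ge2$, under the convention $b_{k-1}(m)=0$ for $m>k-1$ (consistent with the vanishing property). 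It then suffices to check that $b_k(n)=\sum_{r=1}^n(-1)^{r+k}\binom{n-1}{r-1}\alpha((r\lambda)^k-\alpha^k)/(r\lambda-\alpha)$ solves this recursion.

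The efficient route is to expand $((r\lambda)^k-\alpha^k)/(r\lambda-\alpha)=\sum_{j=0}^{k-1}(r\lambda)^j\alpha^{k-1-j}$ and swap the order of summation, rewriting the candidate as $b_k(n)=(-1)^k\sum_{j=0}^{k-1}\lambda^j\alpha^{k-j}S_j(n)$ with $S_j(n)=\sum_{r=1}^n(-1)^r\binom{n-1}{r-1}r^j$. Two elementary facts then do everything. (a) $S_j(n)=0$ for $0\le j\le n-2$: after the shift $s=r-1$ this is, up to sign, the $(n-1)$-th finite difference at $0$ of the degree-$j$ polynomial $(s+1)^j$. This makes the candidate vanish automatically when $k<n$, and it is precisely this that reconciles the convention $b_{k-1}(k)=0$ with the closed form, so the induction needs no special treatment of the diagonal $n=k$. (b) The Pascal-type identity $(n-r)\binom{n-1}{r-1}=(n-1)\binom{n-2}{r-1}$, equivalently $r\binom{n-1}{r-1}=n\binom{n-1}{r-1}-(n-1)\binom{n-2}{r-1}$: multiplying by $(-1)^r r^{j-1}$ and summing over $r$ yields the single recursion $S_j(n)=nS_{j-1}(n)-(n-1)S_{j-1}(n-1)$. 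Substituting the induction hypothesis for $b_{k-1}(n)$ and $b_{k-1}(n-1)$ into the $n\ge2$ recursion, pulling out a factor $\lambda$, and reindexing $j\mapsto j+1$ then turns $-n\lambda b_{k-1}(n)+(n-1)\lambda b_{k-1}(n-1)$ into $(-1)^k\sum_i\lambda^i\alpha^{k-i}S_i(n)=b_k(n)$; the $n=1$ case is the same computation using $S_j(1)=-1$ and $\alpha b_{k-1}(0)=(-1)^{k-1}\alpha^k$. Restoring the factor $t^{k\nu}/\Gamma(k\nu+1)$ gives (\ref{tflbadm}).

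I expect the only genuine pitfall to be organizational: one should not try to match the blocks $((r\lambda)^k-\alpha^k)/(r\lambda-\alpha)$ termwise in $r$, because the $\alpha^k$-parts cancel only after summing over $r$ (via $\sum_{r=1}^n(-1)^r\binom{n-1}{r-1}=0$ for $n\ge2$), and a termwise comparison leaves a spurious $r=n$ term near the diagonal. Committing instead to the $j$-expansion routes around this, since the vanishing of $S_j(n)$ for small $j$ simultaneously produces the $\alpha^k$-cancellation and the automatic boundary behaviour at $n=k$, reducing the whole verification to the single combinatorial recursion for $S_j(n)$.
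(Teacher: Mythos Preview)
Your proposal is correct and complete. The paper itself omits the proof, stating only that it ``follows along the similar lines to that of Theorem \ref{thmadm}'', so there is no detailed argument to compare against; your induction on $k$, with the time variable stripped via (\ref{lemma1}), is exactly the natural reading of that remark.

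Where you go slightly beyond a straight imitation of Theorem \ref{thmadm} is in the treatment of $n\ge2$. A more direct route (and probably what the paper has in mind) keeps the closed form intact: use $(n-1)\binom{n-2}{r-1}=(n-r)\binom{n-1}{r-1}$ to merge the two sums $-n\lambda b_{k-1}(n)+(n-1)\lambda b_{k-1}(n-1)$ into a single sum over $r$ with weight $r\lambda$, then apply the telescoping identity $r\lambda\bigl((r\lambda)^{k-1}-\alpha^{k-1}\bigr)=(r\lambda)^k-\alpha^k-\alpha^{k-1}(r\lambda-\alpha)$ to step from $k-1$ to $k$, the leftover $\alpha^k$-term being killed by $\sum_{r=1}^{n}(-1)^r\binom{n-1}{r-1}=0$ for $n\ge2$. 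Your $j$-expansion via $S_j(n)$ is a repackaging of the same two ingredients (the Pascal identity and the vanishing of $S_0(n)$); it is slightly longer but has the genuine advantage that the boundary case $n=k$ requires no separate check, since fact (a) makes the closed form vanish automatically there. Either way the argument is sound.
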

\begin{proof}
	The proof follows along the similar lines to that of Theorem \ref{thmadm}. Hence, it is omitted.
\end{proof}
\begin{theorem}
	The state probabilities of TFLBPwI are 
	\begin{equation}\label{tflbppmf}
		\bar{p}^\nu(n,t)=\begin{cases}
			E_{\nu,1}(-t^\nu\alpha),\ n=0,\\
			\sum_{r=1}^{n}(-1)^r\binom{n-1}{r-1}\frac{\alpha}{r\lambda-\alpha}(E_{\nu,1}(-t^\nu r\lambda)-E_{\nu,1}(-t^\nu\alpha)),\ n\ge1,
		\end{cases}
	\end{equation}
	where $E_{\nu,1}(\cdot)$ is the Mittag-Leffler function defined as (see \cite{Kilbas2006})
	\begin{equation}\label{mittaglfr}
		E_{\nu,1}(x)=\sum_{k=0}^{\infty}\frac{x^k}{\Gamma(k\nu+1)},\ x\in\mathbb{R}.
	\end{equation}
\end{theorem}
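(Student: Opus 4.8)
The plan is to obtain the state probabilities by summing the series components from the previous theorem over $k$, exactly as the closed form $\bar p^\nu(n,t)=\sum_{k=0}^\infty \bar p_k^\nu(n,t)$ dictates, and then to recognize each resulting series as a Mittag-Leffler function via the defining series (\ref{mittaglfr}). For $n=0$ this is immediate: from (\ref{tflbadm}) we have $\bar p_k^\nu(0,t)=(-\alpha t^\nu)^k/\Gamma(k\nu+1)$, so summing over $k\ge 0$ gives $\sum_{k=0}^\infty (-t^\nu\alpha)^k/\Gamma(k\nu+1)=E_{\nu,1}(-t^\nu\alpha)$, which is precisely the claimed extinction probability.

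For $n\ge 1$, I would start from $\bar p^\nu(n,t)=\sum_{k=n}^\infty \bar p_k^\nu(n,t)$ (the terms with $k<n$ vanish by the third branch of (\ref{tflbadm})) and substitute the second branch:
\begin{equation*}
\bar p^\nu(n,t)=\sum_{k=n}^\infty \sum_{r=1}^{n}(-1)^{r+k}\binom{n-1}{r-1}\frac{\alpha\big((r\lambda)^k-\alpha^k\big)t^{k\nu}}{(r\lambda-\alpha)\Gamma(k\nu+1)}.
\end{equation*}
Then I would interchange the (finite) sum over $r$ with the sum over $k$, giving
\begin{equation*}
\bar p^\nu(n,t)=\sum_{r=1}^{n}(-1)^{r}\binom{n-1}{r-1}\frac{\alpha}{r\lambda-\alpha}\sum_{k=n}^\infty \frac{(-t^\nu r\lambda)^k-(-t^\nu\alpha)^k}{\Gamma(k\nu+1)}.
\end{equation*}
The key observation is that the inner sum can be extended down to $k=0$: for $1\le k\le n-1$ the bracket $(-t^\nu r\lambda)^k-(-t^\nu\alpha)^k$ need not vanish termwise, but I would argue that the contribution of those low-order terms, after multiplication by $\alpha(r\lambda-\alpha)^{-1}(-1)^r\binom{n-1}{r-1}$ and summation over $r$, telescopes to zero — precisely the vanishing of $\sum_k \bar p_k^\nu(n,t)$ for $k<n$ which is already recorded in (\ref{tflbadm}). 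Concretely, since $\bar p_k^\nu(n,t)=0$ for $1\le k\le n-1$, adding those terms changes nothing; and the $k=0$ term contributes $(-t^\nu r\lambda)^0-(-t^\nu\alpha)^0=0$ as well. Hence the inner sum equals $E_{\nu,1}(-t^\nu r\lambda)-E_{\nu,1}(-t^\nu\alpha)$ by (\ref{mittaglfr}), and we arrive at
\begin{equation*}
\bar p^\nu(n,t)=\sum_{r=1}^{n}(-1)^r\binom{n-1}{r-1}\frac{\alpha}{r\lambda-\alpha}\big(E_{\nu,1}(-t^\nu r\lambda)-E_{\nu,1}(-t^\nu\alpha)\big),\quad n\ge 1,
\end{equation*}
which is (\ref{tflbppmf}).

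The main obstacle is justifying the rearrangement of the double series and the extension of the inner summation index: one must confirm absolute convergence so that Fubini applies. This follows because $E_{\nu,1}$ is an entire function of order $1/\nu$, so each series $\sum_k (r\lambda t^\nu)^k/\Gamma(k\nu+1)$ converges absolutely for every $t$; the outer sum over $r$ is finite, so no issue arises there. A minor subtlety is the apparent singularity when $r\lambda=\alpha$ for some $r\le n$; there the coefficient $\alpha/(r\lambda-\alpha)$ and the difference $E_{\nu,1}(-t^\nu r\lambda)-E_{\nu,1}(-t^\nu\alpha)$ both vanish, and the ratio extends continuously (its limit is $-\alpha t^\nu E_{\nu,1}'(-t^\nu\alpha)$ type expression), so (\ref{tflbppmf}) is to be read in that limiting sense, consistent with the corresponding convention already in force in (\ref{tflbadm}). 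As an optional check, one could verify $\sum_{n\ge 0}\bar p^\nu(n,t)=1$ using the Mittag-Leffler representation, paralleling the regularity computation done for TFLBDPwI.
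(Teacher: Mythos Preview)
Your overall strategy---sum the series components (\ref{tflbadm}) over $k$ and recognize the result as Mittag--Leffler functions---is exactly the route the paper takes, and your treatment of $n=0$ is identical to theirs. The interchange of the finite $r$-sum with the $k$-sum is also fine and matches the paper.

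There is, however, a genuine gap in your justification for extending the inner sum from $k\ge n$ down to $k\ge 0$. You write that ``since $\bar p_k^\nu(n,t)=0$ for $1\le k\le n-1$, adding those terms changes nothing.'' But what you are adding is not $\bar p_k^\nu(n,t)$; it is the expression
\[
\sum_{r=1}^{n}(-1)^{r}\binom{n-1}{r-1}\frac{\alpha}{r\lambda-\alpha}\cdot\frac{(-t^\nu r\lambda)^k-(-t^\nu\alpha)^k}{\Gamma(k\nu+1)},
\]
i.e.\ the \emph{second branch} of (\ref{tflbadm}) evaluated at $k<n$. Formula (\ref{tflbadm}) only asserts that this expression equals $\bar p_k^\nu(n,t)$ for $k\ge n$; for $k<n$ the third branch says $\bar p_k^\nu(n,t)=0$, but it does not say the second-branch formula extends to those $k$ and also gives zero. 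That is precisely the nontrivial fact that has to be proved, and your argument as written is circular.

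The paper closes this gap by a direct computation: using $a^k-b^k=(a-b)\sum_{j=0}^{k-1}a^{j}b^{k-1-j}$ to cancel the denominator $r\lambda-\alpha$, the $r$-sum reduces to showing
\[
\sum_{r=1}^{n}(-1)^{r}\binom{n-1}{r-1}r^{j}=0\qquad\text{for }0\le j\le n-2,
\]
which they obtain by differentiating $(1-e^{x})^{n}=\sum_{r=0}^{n}(-1)^{r}\binom{n}{r}e^{rx}$ and evaluating at $x=0$. Once you insert this identity (or an equivalent finite-difference argument that the $(n-1)$st forward difference annihilates polynomials of degree $<n-1$), your proof becomes complete and coincides with the paper's.
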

\begin{proof}
	For $t\ge0$, on summing (\ref{tflbadm}) over the range of $k$, we get
	\begin{equation*}
		\bar{p}^\nu(0,t)=\sum_{k=0}^{\infty}\frac{(-\alpha t^\nu)^k}{\Gamma(k\nu+1)}=E_{\nu,1}(-t^\nu\alpha)
	\end{equation*}
	and
	\begin{align}
		\bar{p}^\nu(n,t)&=\sum_{k=n}^{\infty}\sum_{r=1}^{n}(-1)^{r+k}\binom{n-1}{r-1}\frac{\alpha((r\lambda)^{k}-\alpha^{k})t^{k\nu}}{(r\lambda-\alpha)\Gamma(k\nu+1)}\nonumber\\
		&=\sum_{r=1}^{n}(-1)^r\binom{n-1}{r-1}\frac{\alpha}{r\lambda-\alpha}\left(\sum_{k=1}^{\infty}\left(\frac{(-r\lambda t^\nu)^k}{\Gamma(k\nu+1)}-\frac{(-\alpha t^\nu)^k}{\Gamma(k\nu+1)}\right)-\sum_{k=1}^{n-1}(-1)^k\frac{((r\lambda)^{k}-\alpha^{k})t^{k\nu}}{\Gamma(k\nu+1)}\right)\nonumber\\
		&=\sum_{r=1}^{n}(-1)^r\binom{n-1}{r-1}\frac{\alpha}{r\lambda-\alpha}\left(\sum_{k=0}^{\infty}\frac{(-r\lambda t^\nu)^k}{\Gamma(k\nu+1)}-\sum_{k=0}^{\infty}\frac{(-\alpha t^\nu)^k}{\Gamma(k\nu+1)}\right)\nonumber\\
		&\ \ -\sum_{r=1}^{n}(-1)^r\binom{n-1}{r-1}\frac{\alpha}{r\lambda-\alpha}\sum_{k=1}^{n-1}(-1)^k\frac{((r\lambda)^{k}-\alpha^{k})t^{k\nu}}{(r\lambda-\alpha)\Gamma(k\nu+1)}\nonumber\\
		&=\sum_{r=1}^{n}(-1)^r\binom{n-1}{r-1}\frac{\alpha}{r\lambda-\alpha}\left(\sum_{k=0}^{\infty}\frac{(-r\lambda t^\nu)^k}{\Gamma(k\nu+1)}-\sum_{k=0}^{\infty}\frac{(-\alpha t^\nu)^k}{\Gamma(k\nu+1)}\right)\nonumber\\
		&\ \ -\alpha\sum_{k=1}^{n-1}\frac{(-t^\nu)^k}{\Gamma(k\nu+1)}\sum_{j=0}^{k-1}\lambda^j\alpha^{k-1-j}\sum_{r=1}^{n}(-1)^{r}\binom{n-1}{r-1}r^j\nonumber\\
		&=\sum_{r=1}^{n}(-1)^r\binom{n-1}{r-1}\frac{\alpha}{r\lambda-\alpha}\left(E_{\nu,1}(-r\lambda t^\nu)-E_{\nu,1}(-\alpha t^\nu)\right)\nonumber
	\end{align}
	where we have used $a^k-b^k=(a-b)\sum_{j=0}^{k-1}a^{k-1-j}b^j$, $k\ge1$ and
\begin{equation}\label{0eq}
	\sum_{r=1}^{n}(-1)^{r}\binom{n-1}{r-1}r^j=0
\end{equation}
for all $0\leq j<n-1$. This establishes the required result. To show that (\ref{0eq}) holds true, we proceed as follows:
\begin{equation*}
	(1-e^x)^n=\sum_{r=0}^{n}(-1)^r\binom{n}{r}e^{rx},\ x\in\mathbb{R},\ n\ge2,
\end{equation*}
which on taking derivative with respect to $x$ yields
\begin{equation}\label{0eq1}
	-e^x(1-e^x)^{n-1}=\sum_{r=1}^{n}(-1)^r\binom{n-1}{r-1}e^{rx},\ x\in\mathbb{R}.
\end{equation}
On taking the derivative of order $j<n-1$ on both sides of (\ref{0eq1}) and substituting $x=0$, we get (\ref{0eq}).
\end{proof}

Alternatively, we can use the Laplace method to obtain the state probabilities (\ref{tflbppmf}). The following result gives the explicit form of the Laplace transform of the state probabilities of TFLBPwI.
\begin{theorem}\label{thm3.3.}
	The Laplace transforms of the state probabilities of TFLBPwI are given by
	\begin{equation}\label{p01lap}
		\tilde{\bar{p}}^\nu(n,z)=\begin{cases}
			\displaystyle \frac{z^{\nu-1}}{z^\nu+\alpha},\ n=0,\vspace{0.1cm}\\
			\displaystyle\sum_{k=1}^{n}\binom{n-1}{k-1}\frac{(-1)^{k}\alpha}{k\lambda-\alpha}\left(\frac{z^{\nu-1}}{z^\nu+k\lambda}-\frac{z^{\nu-1}}{z^\nu+\alpha}\right),\ n\ge1.
		\end{cases}
	\end{equation}
\end{theorem}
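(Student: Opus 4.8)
The plan is to apply the Laplace transform directly to the governing system (\ref{fbpwiequ}), as suggested by the sentence preceding the statement. Using (\ref{frderlap}) together with the initial conditions $\bar p^\nu(0,0)=1$ and $\bar p^\nu(n,0)=0$ for $n\ge 1$, the three branches of (\ref{fbpwiequ}) transform into the algebraic system
\begin{align*}
	(z^\nu+\alpha)\,\tilde{\bar p}^\nu(0,z) &= z^{\nu-1},\\
	(z^\nu+\lambda)\,\tilde{\bar p}^\nu(1,z) &= \alpha\,\tilde{\bar p}^\nu(0,z),\\
	(z^\nu+n\lambda)\,\tilde{\bar p}^\nu(n,z) &= (n-1)\lambda\,\tilde{\bar p}^\nu(n-1,z),\qquad n\ge 2.
\end{align*}
The first line is precisely the $n=0$ formula in (\ref{p01lap}). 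For $n\ge 1$, a short induction on $n$ (iterating the last recursion down to $\tilde{\bar p}^\nu(1,z)=\alpha z^{\nu-1}/\big((z^\nu+\lambda)(z^\nu+\alpha)\big)$) yields the closed product form
\begin{equation*}
	\tilde{\bar p}^\nu(n,z)=\frac{(n-1)!\,\lambda^{n-1}\,\alpha\, z^{\nu-1}}{(z^\nu+\alpha)\prod_{m=1}^{n}(z^\nu+m\lambda)},\qquad n\ge 1.
\end{equation*}
The same product formula also drops out of the time-change identity $\tilde{\bar p}^\nu(n,z)=z^{\nu-1}\tilde{\bar p}(n,z^\nu)$ (the $\mu=0$ instance of (\ref{lap1})) applied to the classical linear birth process with immigration, so either route is available.

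Next I would convert the product form into the partial-fraction expression in (\ref{p01lap}). Setting $w:=z^\nu$, the standard residue computation gives
\begin{equation*}
	\frac{(n-1)!\,\lambda^{n-1}}{\prod_{m=1}^{n}(w+m\lambda)}=\sum_{k=1}^{n}\frac{(-1)^{k-1}\binom{n-1}{k-1}}{w+k\lambda},
\end{equation*}
since the residue at $w=-k\lambda$ equals $(n-1)!\,\lambda^{n-1}\big/\prod_{m\ne k}(m\lambda-k\lambda)=(-1)^{k-1}\binom{n-1}{k-1}$. Combining this with the elementary splitting $\dfrac{1}{(w+\alpha)(w+k\lambda)}=\dfrac{1}{k\lambda-\alpha}\Bigl(\dfrac{1}{w+\alpha}-\dfrac{1}{w+k\lambda}\Bigr)$ and multiplying back by $\alpha z^{\nu-1}$, one gets
\begin{equation*}
	\tilde{\bar p}^\nu(n,z)=\alpha z^{\nu-1}\sum_{k=1}^{n}(-1)^{k-1}\binom{n-1}{k-1}\frac{1}{k\lambda-\alpha}\Bigl(\frac{1}{z^\nu+\alpha}-\frac{1}{z^\nu+k\lambda}\Bigr);
\end{equation*}
absorbing the sign $(-1)^{k-1}$ into the bracket turns this into exactly (\ref{p01lap}).

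The argument is essentially mechanical, so there is no serious obstacle; the only points demanding care are (i) carrying out the induction cleanly so that the indices in the product $\prod_{m=1}^{n}(z^\nu+m\lambda)$ and the factor $(n-1)!\,\lambda^{n-1}$ come out right, and (ii) tracking the binomial coefficients and signs through the partial-fraction step so the final answer matches (\ref{p01lap}) term by term. I would also remark that the partial-fraction decomposition presupposes the poles $-\alpha,-\lambda,\dots,-n\lambda$ are pairwise distinct; when $\alpha=k\lambda$ for some $k$, both (\ref{p01lap}) and the product form above are to be read as the corresponding limit, and taking that limit directly in the product form avoids any ambiguity.
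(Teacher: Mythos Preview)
Your argument is correct. Both you and the paper begin by taking Laplace transforms of (\ref{fbpwiequ}) via (\ref{frderlap}), but from there the routes diverge. The paper proves (\ref{p01lap}) directly by induction on $n$: it verifies the cases $n=0,1$, assumes the partial-fraction form for $n=m-1$, substitutes into the recursion $(z^\nu+m\lambda)\tilde{\bar p}^\nu(m,z)=(m-1)\lambda\,\tilde{\bar p}^\nu(m-1,z)$, and then carries out a somewhat intricate regrouping of the resulting double sum to recover (\ref{p01lap}) at level $m$. You instead iterate the recursion to the compact product form $\tilde{\bar p}^\nu(n,z)=(n-1)!\,\lambda^{n-1}\alpha z^{\nu-1}\big/\bigl((z^\nu+\alpha)\prod_{m=1}^n(z^\nu+m\lambda)\bigr)$ and only then expand in partial fractions via a single residue computation. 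Your organization is arguably cleaner: the product form is immediate, the partial-fraction step is a one-line residue evaluation, and the sign/binomial bookkeeping is concentrated in one place rather than spread through an inductive manipulation. The paper's approach, by contrast, avoids introducing the auxiliary product representation and works entirely within the target form, at the cost of a lengthier algebraic verification. Your remark about the degenerate case $\alpha=k\lambda$ is a nice addition that the paper does not mention.
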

\begin{proof}
	On taking $n=0$ in (\ref{fbpwiequ}), we get
	$
		({\mathrm{d}^\nu}/{\mathrm{d}t^\nu})\bar{p}^\nu(0,t)=-\alpha \bar{p}^\nu(0,t) 
	$
	with initial condition $\bar{p}^\nu(0,0)=1.$
	On using (\ref{frderlap}), its Laplace transform is given by $\tilde{\bar{p}}^\nu(0,z)=z^{\nu-1}/(z^\nu+\alpha)$. For $n=1$ in (\ref{fbpwiequ}), we get
	\begin{equation}\label{p01}
		\bar{p}^\nu(1,t)=-\lambda \bar{p}^\nu(1,t)+\alpha \bar{p}^\nu(0,t)
	\end{equation}
	with $\bar{p}^\nu(1,0)=0$.
	On taking Laplace transform on both sides of (\ref{p01}) and using (\ref{frderlap}), we get
	\begin{equation*}
		\tilde{\bar{p}}^\nu(1,z)=\frac{\alpha}{\lambda-\alpha}\left(\frac{z^{\nu-1}}{z^\nu+\alpha}-\frac{z^{\nu-1}}{z^\nu+\lambda}\right).
	\end{equation*}
	So, (\ref{p01lap}) holds true for $n=0,1$.
	Let us assume that it holds true for some $n=m-1$, where $m\ge2$, that is,
	\begin{equation}\label{indhyp1}
		\tilde{\bar{p}}^\nu(m-1,z)=\sum_{k=1}^{m-1}\binom{m-2}{k-1}\frac{(-1)^{k}\alpha}{k\lambda-\alpha}\left(\frac{z^{\nu-1}}{z^\nu+k\lambda}-\frac{z^{\nu-1}}{z^\nu+\alpha}\right).
	\end{equation}
	For $n=m$, from (\ref{fbpwiequ}), we have 
	\begin{equation}\label{cr2}
		\frac{\mathrm{d}^\nu }{\mathrm{d}t^\nu}p^\nu(m,t)=-m\lambda p^\nu(m,t)+(m-1)\lambda p^\nu(m-1,t),\ p^\nu(m,0)=0.
	\end{equation}
	On taking the Laplace transform on both sides (\ref{cr2}) and using (\ref{indhyp1}), we get
	\begin{align*}
		\tilde{\bar{p}}^\nu(m,z)&=(m-1)\lambda\sum_{k=1}^{m-1}\binom{m-2}{k-1}\frac{(-1)^{k}\alpha}{k\lambda-\alpha}\left(\frac{z^{\nu-1}}{(z^\nu+k\lambda)(z^\nu+m\lambda)}-\frac{z^{\nu-1}}{(z^\nu+\alpha)(z^\nu+m\lambda)}\right)\\
		&=(m-1)\lambda\sum_{k=1}^{m-1}\binom{m-2}{k-1}\frac{(-1)^{k}\alpha}{k\lambda-\alpha}\frac{1}{(m-k)\lambda}\left(\frac{z^{\nu-1}}{z^\nu+k\lambda}-\frac{z^{\nu-1}}{z^\nu+m\lambda}\right)\nonumber\\
		&\ \ -(m-1)\lambda\sum_{k=1}^{m-1}\binom{m-2}{k-1}\frac{(-1)^{k}\alpha}{k\lambda-\alpha}\frac{1}{m\lambda-\alpha}\left(\frac{z^{\nu-1}}{z^\nu+\alpha}-\frac{z^{\nu-1}}{z^\nu+m\lambda}\right)\nonumber\\
		&=\sum_{k=1}^{m-1}\binom{m-1}{k-1}\frac{(-1)^{k}\alpha}{k\lambda-\alpha}\frac{z^{\nu-1}}{z^\nu+k\lambda}-\sum_{k=1}^{m-1}\binom{m-1}{k-1}\frac{(-1)^{k}\alpha}{k\lambda-\alpha}\frac{(m-k)\lambda}{m\lambda-\alpha}\frac{z^{\nu-1}}{z^\nu+\alpha}\\
		&\ \ +\frac{\alpha}{(m\lambda-\alpha)}\frac{z^{\nu-1}}{z^\nu+m\lambda}\sum_{k=1}^{m-1}\binom{m-1}{k-1}{(-1)^{k-1}}\\
		&=\sum_{k=1}^{m-1}\binom{m-1}{k-1}\frac{(-1)^{k}\alpha}{k\lambda-\alpha}\frac{z^{\nu-1}}{z^\nu+k\lambda}-\sum_{k=1}^{m-1}\binom{m-1}{k-1}\frac{(-1)^{k}\alpha}{k\lambda-\alpha}\frac{z^{\nu-1}}{z^\nu+\alpha}\\
		&\ \ -\frac{\alpha}{m\lambda-\alpha}\frac{z^{\nu-1}}{z^\nu+\alpha}\sum_{k=1}^{m-1}\binom{m-1}{k-1}(-1)^{k-1}-\frac{\alpha(-1)^{m-1}}{m\lambda-\alpha}\frac{z^{\nu-1}}{z^\nu+m\lambda}\\
		&=\sum_{k=1}^{m}\binom{m-1}{k-1}\frac{(-1)^{k}\alpha}{k\lambda-\alpha}\frac{z^{\nu-1}}{z^\nu+k\lambda}-\sum_{k=1}^{m}\binom{m-1}{k-1}\frac{(-1)^{k}\alpha}{k\lambda-\alpha}\frac{z^{\nu-1}}{z^\nu+\alpha}.
		\end{align*}
	Thus, the proof is complete by using the method of induction.
\end{proof}
	\begin{remark}\label{thm22}
		On taking the inverse Laplace transform of (\ref{p01lap}) and using the following result (see \cite{Kilbas2006})
		\begin{equation}\label{Mittag-lap}
			\int_{0}^{\infty}e^{-zx}E_{\nu,1}(cx^\nu)\,\mathrm{d}x=\frac{z^{\nu-1}}{z^\nu-c},\ z>0,\, c\in\mathbb{R}, 
		\end{equation}
		 we get the state probabilities of TFLBPwI as follows:
		\begin{equation}\label{00p0}
			\bar{p}^\nu(n,t)=\begin{cases}
				E_{\nu,1}(-t^\nu\alpha),\ n=0,\vspace{0.1cm}\\
				\displaystyle\sum_{k=1}^{n}\binom{n-1}{k-1}\frac{(-1)^{k}\alpha}{k\lambda-\alpha}(E_{\nu,1}(-kt^\nu\lambda)-E_{\nu,1}(-t^\nu\alpha)),\ n\ge1,
			\end{cases}
		\end{equation}
		which agree with (\ref{tflbppmf}).
		
		In particular, for $\nu=1$, the TFLBPwI reduces to the linear birth process with immigration. Its probability mass function is 
		\begin{equation*}
			\bar{p}(n,t)=\begin{cases}
				e^{-\alpha t},\ n=0,\vspace{0.1cm}\\
				\displaystyle\sum_{k=1}^{n}\binom{n-1}{k-1}\frac{(-1)^{k}\alpha}{k\lambda-\alpha}(e^{-kt\lambda}-e^{-t\alpha}),\ n\ge1.
			\end{cases}
		\end{equation*}
	\end{remark}

\begin{remark}
	If $T^\nu$ denotes the waiting time of  the TFLBPwI in state $n=0$ then it follows Mittag-Leffler distribution with parameter $\alpha$, that is, $\mathrm{Pr}\{T^\nu> s\}=E_{\nu,1}(-s^\nu\alpha)$, $s>0$.  
\end{remark}

\begin{remark}
	If the rate of immigration $\alpha$ is sufficiently small enough such that it just trigger the process, that is, $\alpha^l=o(\alpha)$ for all $l\ge2$ and $k\lambda-\alpha\approx k\lambda$, where $o(\alpha)/\alpha\to 0$ as $\alpha\to 0$, then
	$$
		E_{\nu,1}(-t^\nu\alpha)= 1-{t^\nu\alpha}/{\Gamma(1+\nu)}+o(\alpha).
$$
	 So, from (\ref{00p0}), the state probabilities of TFLBPwI reduces to
	$
		\bar{p}^\nu(0,t)=1-{t^\nu\alpha}/{\Gamma(1+\nu)}+o(\alpha),\ t\ge0
	$
	and
	\begin{align*}
		\bar{p}^\nu(n,t)&=\frac{\alpha}{n\lambda}\left(\sum_{k=1}^{n}\binom{n}{k}(-1)^kE_{\nu,1}(-kt^\nu\lambda)-\left(1-\frac{t^\nu\alpha}{\Gamma(1+\nu)}\right)\sum_{k=1}^{n}\binom{n}{k}(-1)^k\right)+o(\alpha)\\
		&=\frac{\alpha}{n\lambda}\left(\sum_{k=1}^{n}\binom{n}{k}(-1)^kE_{\nu,1}(-kt^\nu\lambda)+1\right)-\frac{t^\nu\alpha^2}{n\lambda\Gamma(1+\nu)}+o(\alpha)\\
		&=\frac{\alpha}{n\lambda}\sum_{k=0}^{n}\binom{n}{k}(-1)^kE_{\nu,1}(-kt^\nu\lambda)+o(\alpha),\ n\ge1.
	\end{align*}
	For $\nu=1$, we get
	$
		\bar{p}(0,t)=1-\alpha t+o(\alpha)\ \text{and}\ \bar{p}(n,t)={\alpha(1-e^{-t\lambda})^n}/{n\lambda}+o(\alpha),\ n\ge1.
$
\end{remark}

\begin{theorem}
	The state probabilities (\ref{tflbppmf}) satisfy the regularity condition, that is,
	$
		\sum_{n=0}^{\infty}\bar{p}^\nu(n,t)$ $=1.
$
\end{theorem}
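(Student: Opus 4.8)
The plan is to read the identity off directly from the closed form (\ref{tflbppmf}), reducing it to two elementary facts: the hockey-stick identity $\sum_{n=r}^{m}\binom{n-1}{r-1}=\binom{m}{r}$ and the finite-difference vanishing $\sum_{r=0}^{m}(-1)^r\binom{m}{r}r^j=0$ for $0\le j\le m-1$ (obtained, as in the derivation of (\ref{0eq}), by repeated differentiation of $(1-e^x)^m$ at $x=0$). First I would split $\sum_{n\ge0}\bar{p}^\nu(n,t)=\bar{p}^\nu(0,t)+\sum_{n\ge1}\bar{p}^\nu(n,t)$ and expand, via (\ref{mittaglfr}), each Mittag-Leffler difference appearing in the $n\ge1$ part. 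Using $a^k-b^k=(a-b)\sum_{j=0}^{k-1}a^j b^{k-1-j}$ with $a=r\lambda$, $b=\alpha$, the quotient $\big(E_{\nu,1}(-r\lambda t^\nu)-E_{\nu,1}(-\alpha t^\nu)\big)/(r\lambda-\alpha)$ turns into $\sum_{k\ge1}\frac{(-1)^k t^{k\nu}}{\Gamma(k\nu+1)}\sum_{j=0}^{k-1}(r\lambda)^j\alpha^{k-1-j}$, whence for $n\ge1$
\begin{equation*}
	\bar{p}^\nu(n,t)=\alpha\sum_{k=1}^{\infty}\frac{(-1)^k t^{k\nu}}{\Gamma(k\nu+1)}\sum_{j=0}^{k-1}\lambda^j\alpha^{k-1-j}\sum_{r=1}^{n}(-1)^r\binom{n-1}{r-1}r^j,
\end{equation*}
and by (\ref{0eq}) the innermost $r$-sum vanishes unless $j\ge n-1$, i.e.\ unless $n\le j+1$.

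Next I would sum over $n\ge1$ and interchange the order of summation so as to perform the $n$-summation before the $r$-summation; by the vanishing just noted only $n\in\{1,\dots,j+1\}$ contribute for each $j$. Swapping $n$ and $r$ and using the hockey-stick identity collapses $\sum_{n=1}^{j+1}\sum_{r=1}^{n}(-1)^r\binom{n-1}{r-1}r^j$ to $\sum_{r=1}^{j+1}(-1)^r\binom{j+1}{r}r^j$, which by the finite-difference identity above equals $-1$ when $j=0$ and $0$ otherwise. Thus every term with $j\ge1$ drops out and
\begin{equation*}
	\sum_{n=1}^{\infty}\bar{p}^\nu(n,t)=-\alpha\sum_{k=1}^{\infty}\frac{(-1)^k t^{k\nu}\alpha^{k-1}}{\Gamma(k\nu+1)}=-\sum_{k=1}^{\infty}\frac{(-\alpha t^\nu)^k}{\Gamma(k\nu+1)}=1-E_{\nu,1}(-\alpha t^\nu)=1-\bar{p}^\nu(0,t),
\end{equation*}
so $\sum_{n\ge0}\bar{p}^\nu(n,t)=1$.

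The single delicate point is the interchange of summations: the array indexed by $(n,k)$ fails to be absolutely summable once the binomial coefficients are taken into account, so Fubini does not apply verbatim and the cancellations that make the rearranged series converge are essential rather than incidental. The computation above should therefore be read as the route to the answer; to make it rigorous I would instead invoke the subordination identity $\bar{p}^\nu(n,t)=\int_0^{\infty}\bar{p}(n,x)\,\mathrm{Pr}\{L_\nu(t)\in\mathrm dx\}$ (the $\mu=0$ specialization of (\ref{1})). Since every $\bar{p}(n,x)\ge0$, Tonelli's theorem yields $\sum_{n\ge0}\bar{p}^\nu(n,t)=\int_0^{\infty}\big(\sum_{n\ge0}\bar{p}(n,x)\big)\,\mathrm{Pr}\{L_\nu(t)\in\mathrm dx\}$, and $\sum_{n\ge0}\bar{p}(n,x)=1$ because the underlying linear birth process with immigration is non-explosive (its only unbounded rates are the birth rates $n\lambda$, with $\sum_{n\ge1}(n\lambda)^{-1}=\infty$); the integral then equals $\int_0^{\infty}\mathrm{Pr}\{L_\nu(t)\in\mathrm dx\}=1$. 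I expect this interchange/convergence step to be the only real obstacle — the combinatorial identities themselves are entirely standard.
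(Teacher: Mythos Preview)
Your proposal is correct and takes a genuinely different route from the paper. The paper works in the Laplace domain: it transforms $\sum_{n\ge0}\bar p^\nu(n,t)$, collapses the inner alternating sum via the identity $\sum_{i=0}^N\binom{N}{i}\frac{(-1)^i}{a+i}=\frac{N!}{a(a+1)\cdots(a+N)}$, recognises the result as a Beta integral, sums a geometric series inside that integral, and arrives at $1/z$. Your combinatorial route stays in the time domain and replaces the Beta-function trick by the hockey-stick identity together with the finite-difference vanishing; this is more elementary, but, as you rightly flag, the $n$--$k$ interchange is not covered by absolute convergence (for $\nu<1$ the crude bounds on the series components grow too fast). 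Your subordination-plus-Tonelli argument is the cleanest of the three: it sidesteps the explicit formula (\ref{tflbppmf}) altogether and reduces the fractional regularity statement to the classical $\nu=1$ case via non-explosion of the underlying Yule-type chain, which indeed follows from $\sum_{n\ge1}(n\lambda)^{-1}=\infty$. The paper's approach, by contrast, is self-contained from (\ref{tflbppmf}) and does not appeal to the time-change construction or to non-explosion, at the price of a longer computation.
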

\begin{proof}
	From (\ref{00p0}), we have
	\begin{equation}\label{regular1}
\sum_{n=0}^{\infty}\bar{p}^\nu(n,t)=E_{\nu,1}(-t^\nu\alpha)+\sum_{n=1}^{\infty}\sum_{k=1}^{n}\binom{n-1}{k-1}\frac{(-1)^{k}\alpha}{k\lambda-\alpha}(E_{\nu,1}(-kt^\nu\lambda)-E_{\nu,1}(-t^\nu\alpha)).
	\end{equation}
	On taking the Laplace transform on both sides of (\ref{regular1}), we obtain
	\begin{align*}
		\int_{0}^{\infty}e^{-zt}\sum_{n=0}^{\infty}\bar{p}^\nu(n,t)\,\mathrm{d}t&=\frac{z^{\nu-1}}{z^\nu+\alpha}+\sum_{n=1}^{\infty}\sum_{k=1}^{n}\binom{n-1}{k-1}\frac{(-1)^{k}\alpha}{k\lambda-\alpha}\left(\frac{z^{\nu-1}}{z^\nu+k\lambda}-\frac{z^{\nu-1}}{z^\nu+\alpha}\right)\\
		&=\frac{z^{\nu-1}}{z^\nu+\alpha}-\frac{z^{\nu-1}\alpha}{(z^\nu+\alpha)\lambda}\sum_{n=1}^{\infty}\sum_{k=1}^{n}\binom{n-1}{k-1}\frac{(-1)^{k}}{z^\nu/\lambda+k}\\
		&=\frac{z^{\nu-1}}{z^\nu+\alpha}+\frac{z^{\nu-1}\alpha}{(z^\nu+\alpha)\lambda}\sum_{n=1}^{\infty}\sum_{k=0}^{n-1}\binom{n-1}{k}\frac{(-1)^{k}}{z^\nu/\lambda+1+k}.
	\end{align*}
	Now, we use the following result (see \cite{Kirschenhofer1996}):
	\begin{equation}\label{formula}
		\sum_{i=0}^{N}\binom{N}{i}\frac{(-1)^i}{a+i}=\frac{N!}{a(a+1)\cdots(a+N)},\ a\in\mathbb{R},
	\end{equation}
	to obtain
	\begin{align}
		\int_{0}^{\infty}e^{-zt}\sum_{n=0}^{\infty}\bar{p}^\nu(n,t)\,\mathrm{d}t&=\frac{z^{\nu-1}}{z^\nu+\alpha}+\frac{z^{\nu-1}\alpha}{(z^\nu+\alpha)\lambda}\sum_{n=1}^{\infty}\frac{(n-1)!}{(z^\nu/\lambda+1)(z^\nu/\lambda+2)\cdots(z^\nu/\lambda+n)}\nonumber\\
		&=\frac{z^{\nu-1}}{z^\nu+\alpha}+\frac{z^{\nu-1}\alpha}{(z^\nu+\alpha)\lambda}\sum_{k=0}^{\infty}\frac{\Gamma(k+1)\Gamma(z^\nu/\lambda+1)}{\Gamma(z^\nu/\lambda+1+(k+1))}\nonumber\\
		&=\frac{z^{\nu-1}}{z^\nu+\alpha}+\frac{z^{\nu-1}\alpha}{(z^\nu+\alpha)\lambda}\sum_{k=0}^{\infty}B(k+1,z^\nu/\lambda+1)\nonumber\\
		&=\frac{z^{\nu-1}}{z^\nu+\alpha}+\frac{z^{\nu-1}\alpha}{(z^\nu+\alpha)\lambda}\sum_{k=0}^{\infty}\int_{0}^{1}w^k(1-w)^{z^\nu/\lambda}\,\mathrm{d}w\label{pfbt}\\
		&=\frac{z^{\nu-1}}{z^\nu+\alpha}+\frac{z^{\nu-1}\alpha}{(z^\nu+\alpha)\lambda}\int_{0}^{1}(1-w)^{z^\nu/\lambda-1}\,\mathrm{d}w\nonumber\\
		&=\frac{z^{\nu-1}}{z^\nu+\alpha}+\frac{\alpha}{z^\nu+\alpha}\int_{0}^{1}\frac{1}{z}\,\mathrm{d}x=\frac{1}{z},\ \ (\text{taking $(1-w)^{z^\nu/\lambda}=x$})\nonumber
	\end{align}
	where we have used $B(a,b)=\int_{0}^{1}w^{a-1}(1-w)^{b-1}\,\mathrm{d}w$, $a>0$, $b>0$ to get (\ref{pfbt}). This completes the proof.
\end{proof}
\begin{remark}
	The pgf $\bar{G}^\nu(u,t)=\mathbb{E}u^{\bar{N}^\nu(t)}$, $|u|\leq1$ of TFLBPwI solves 
	\begin{equation}\label{pgfflbpwi}
		\frac{\partial^\nu}{\partial t^\nu}\bar{G}^\nu(u,t)=\lambda u(u-1)\frac{\partial}{\partial u}\bar{G}^\nu(u,t)+\alpha(u-1)\bar{p}^\nu(0,t)
	\end{equation}
	with $\bar{G}^\nu(u,0)=1$. On taking the derivative of (\ref{pgfflbpwi}) with respect to $u$ and substituting $u=1$, we get
	$
		({\mathrm{d}^\nu}/{\mathrm{d}t^\nu})	\mathbb{E}\bar{N}^\nu(t)=\lambda\mathbb{E}\bar{N}^\nu(t)+\alpha \bar{p}^\nu(0,t)\ \text{with}\ 	\mathbb{E}\bar{N}^\nu(0)=0.
	$
	So, using (\ref{frderlap}) and (\ref{p01lap}), the Laplace transform of $	\mathbb{E}\bar{N}^\nu(t)$ is
	\begin{equation}\label{lapmeanc}
		\int_{0}^{\infty}e^{-zt}	\mathbb{E}\bar{N}^\nu(t)\,\mathrm{d}t=\frac{\alpha}{\alpha+\lambda}\left(\frac{z^{\nu-1}}{z^\nu-\lambda}-\frac{z^{\nu-1}}{z^\nu+\alpha}\right).
	\end{equation}
	On taking the inverse Laplace transform, we get 
$
			\mathbb{E}\bar{N}^\nu(t)={\alpha}{(\alpha+\lambda)^{-1}}(E_{\nu,1}(t^\nu\lambda)-E_{\nu,1}({-t^\nu\alpha})).
$

	 Similarly, the second factorial moment $\bar{\mu}^\nu(t)\coloneqq\mathbb{E}\bar{N}^\nu(t)(\bar{N}^\nu(t)-1)=({\partial^2}/{\partial u^2})\bar{G}^\nu(u,t)\big|_{u=1}$ of TFLBPwI solves
	\begin{equation}\label{00var}
		\frac{\mathrm{d}^\nu \bar{\mu}^\nu(t)}{\mathrm{d}t^\nu}=2\lambda\mathbb{E}\bar{N}^\nu(t)+2\lambda\bar{\mu}^\nu(t),\  \bar{\mu}^\nu(0)=0.
	\end{equation}
	On taking the Laplace transform on both sides of (\ref{00var}) and using (\ref{lapmeanc}), we have
	\begin{align*}
		\int_{0}^{\infty}e^{-zt}\bar{\mu}^\nu(t)\,\mathrm{d}t&=\frac{2\alpha\lambda}{\alpha+\lambda}\Bigg(\frac{1}{\lambda}\left(\frac{z^{\nu-1}}{z^\nu-2\lambda}-\frac{z^{\nu-1}}{z^\nu-\lambda}\right) -\frac{1}{\alpha+2\lambda}\left(\frac{z^{\nu-1}}{z^\nu-2\lambda}-\frac{z^{\nu-1}}{z^\nu+\alpha}\right)\Bigg)\\
	&=\frac{2\alpha\lambda}{\alpha+\lambda}\Bigg(\frac{\alpha+\lambda}{\lambda(\alpha+2\lambda)}\frac{z^{\nu-1}}{z^\nu-2\lambda} -\frac{1}{\lambda}\frac{z^{\nu-1}}{z^\nu-\lambda}+\frac{1}{\alpha+2\lambda}\frac{z^{\nu-1}}{z^\nu+\alpha}\Bigg).
\end{align*}
		Its inverse Laplace transform gives
		\begin{equation*}
			\bar{\mu}^\nu(t)=\frac{2\alpha\lambda}{\alpha+\lambda}\left(\frac{(\alpha+\lambda)E_{\nu,1}(2t^\nu\lambda)}{\lambda(\alpha+2\lambda)}-\frac{E_{\nu,1}(t^\nu\lambda)}{\lambda}+\frac{E_{\nu,1}(-t^\nu\alpha)}{\alpha+2\lambda}\right).
		\end{equation*}
		Thus, the variance of TFLBPwI is
		\begin{align*}
			\mathbb{V}\mathrm{ar}\bar{N}^\nu(t)&=\bar{\mu}^\nu(t)+\mathbb{E}\bar{N}^\nu(t)-(\mathbb{E}\bar{N}^\nu(t))^2\\
			&=\frac{2\alpha E_{\nu,1}(2t^\nu\lambda)}{\alpha+2\lambda}-\frac{\alpha E_{\nu,1}(t^\nu\lambda)}{\alpha+\lambda}-\frac{\alpha^2E_{\nu,1}(-t^\nu\alpha)}{(\alpha+\lambda)(\alpha+2\lambda)} -\frac{\alpha^2}{(\alpha+\lambda)^2}(E_{\nu,1}(t^\nu\lambda)-E_{\nu,1}({-t^\nu\alpha}))^2.
		\end{align*}
\end{remark}

\begin{theorem}
	The Laplace transform of the pgf of TFLBPwI is given by
	\begin{equation*}
	\tilde{\bar{G}}^\nu(u,z)=\frac{z^{\nu-1}}{z^\nu+\alpha}+\frac{\alpha }{(z^\nu+\alpha)}\int_{0}^{\infty}\frac{ue^{-\lambda x}}{1-u(1-e^{-\lambda x})}e^{-xz^\nu}z^{\nu-1}\,\mathrm{d}x,\ z>0.
	\end{equation*}
\end{theorem}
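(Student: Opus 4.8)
The plan is to reduce the fractional computation to the classical ($\nu=1$) one via subordination, solve the $\nu=1$ case by an elementary decomposition into a waiting time plus a Yule process, and then Laplace-transform and substitute back.

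First I would record the subordination identity for this special case. Since the TFLBPwI is the $\mu=0$ instance of the TFLBDPwI, we have $\bar N^\nu(t)=\bar N(L_\nu(t))$, where $\bar N(\cdot)$ is the classical linear birth process with immigration at zero (the $\nu=1$ process) and $L_\nu$ is independent of it. Conditioning on $L_\nu(t)$ gives $\bar G^\nu(u,t)=\int_0^\infty \bar G(u,x)\,\mathrm{Pr}\{L_\nu(t)\in\mathrm dx\}$, and taking the Laplace transform in $t$ and using (\ref{subdenlap1}) exactly as in the derivation of (\ref{lap1}) yields $\tilde{\bar G}^\nu(u,z)=z^{\nu-1}\tilde{\bar G}(u,z^\nu)$, where $\tilde{\bar G}(u,z)=\int_0^\infty e^{-zt}\bar G(u,t)\,\mathrm dt$. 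Hence it suffices to compute $\bar G(u,t)$.

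Next I would compute $\bar G(u,t)$ probabilistically. Because $\mu=0$, once an immigrant has arrived the population can never return to $0$: the sojourn time $T$ of $\bar N$ at state $0$ is exponential with rate $\alpha$, and, by the strong Markov property, conditionally on $\{T=s\}$ the process $\{\bar N(t):t\ge s\}$ is a Yule process (linear pure birth with individual rate $\lambda$) started from one individual. Since the Yule process started from $1$ has a geometric law with pgf $x\mapsto ue^{-\lambda x}/(1-u(1-e^{-\lambda x}))$, conditioning on $T$ gives
\begin{equation*}
	\bar G(u,t)=e^{-\alpha t}+\int_0^t\alpha e^{-\alpha s}\,\frac{ue^{-\lambda(t-s)}}{1-u(1-e^{-\lambda(t-s)})}\,\mathrm ds,\qquad |u|\le 1.
\end{equation*}
(As a check, this gives $\bar G(u,0)=1$ and $\bar G(1,t)=1$; alternatively one could verify directly that it solves the $\nu=1$ instance of (\ref{pgfflbpwi}), or obtain it via the method of characteristics, the relevant characteristic being the standard Yule one $u/(1-u)=\text{const}\cdot e^{\lambda t}$.) Laplace-transforming in $t$, the first term contributes $1/(z+\alpha)$ and the second is the convolution of $s\mapsto\alpha e^{-\alpha s}$ with $h(x)=ue^{-\lambda x}/(1-u(1-e^{-\lambda x}))$, so
\begin{equation*}
	\tilde{\bar G}(u,z)=\frac{1}{z+\alpha}+\frac{\alpha}{z+\alpha}\int_0^\infty\frac{ue^{-\lambda x}}{1-u(1-e^{-\lambda x})}\,e^{-zx}\,\mathrm dx,
\end{equation*}
and substituting into $\tilde{\bar G}^\nu(u,z)=z^{\nu-1}\tilde{\bar G}(u,z^\nu)$ yields the asserted formula.

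The main obstacle is making Step 2 rigorous: confirming via the strong Markov property that after the immigration epoch the process is exactly a Yule process started from $1$, that the series defining $\bar G(u,t)$ converges for $|u|\le 1$, and that the convolution theorem for Laplace transforms applies. The last point is routine once one observes that $|1-u(1-e^{-\lambda x})|\ge e^{-\lambda x}>0$ for $|u|\le 1$, whence $|h(x)|\le|u|\le 1$ and all the integrals converge absolutely for $z>0$. If one wishes to bypass probabilistic arguments, the identity can instead be proved by summing $\sum_{n\ge 0}u^n\tilde{\bar p}^\nu(n,z)$ using Theorem \ref{thm3.3.}, the binomial identity (\ref{formula}), and the Beta-integral manipulation already employed in the regularity proof; that route is more computational but fully elementary.
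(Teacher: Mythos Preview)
Your argument is correct and takes a genuinely different route from the paper's. The paper proceeds purely analytically: it starts from the Laplace transforms of the individual state probabilities obtained in Theorem~\ref{thm3.3.}, sums $\sum_{n\ge 0}u^n\tilde{\bar p}^\nu(n,z)$, reduces the inner binomial sum via the Kirschenhofer identity (\ref{formula}) to a ratio of factorials, recognises a Beta integral, and finally substitutes $1-w=e^{-\lambda x}$ to reach the stated expression. Your approach, by contrast, bypasses the state probabilities entirely: you first use the subordination relation $\tilde{\bar G}^\nu(u,z)=z^{\nu-1}\tilde{\bar G}(u,z^\nu)$ (the pgf analogue of (\ref{lap1})) to reduce to $\nu=1$, and then exploit the probabilistic structure---an exponential sojourn at $0$ followed by a Yule process---together with the convolution theorem for Laplace transforms. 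This is shorter, avoids the combinatorial identity and Beta-function machinery, and makes the appearance of the Yule pgf $ue^{-\lambda x}/(1-u(1-e^{-\lambda x}))$ in the final formula transparent rather than a by-product of a change of variable. The paper's route, on the other hand, is entirely self-contained given its earlier results and requires no appeal to the strong Markov property or to the classical Yule distribution. You already note this alternative in your closing remark; that is precisely the paper's proof.
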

\begin{proof}
	On using (\ref{00p0}) and (\ref{Mittag-lap}), we have
	\begin{align*}
		\tilde{\bar{G}}^\nu(u,z)&=\int_{0}^{\infty}e^{-zt}{\bar{G}}^\nu(u,t)\,\mathrm{d}t\\
		&=\frac{z^{\nu-1}}{z^\nu+\alpha}-\frac{z^{\nu-1}\alpha}{z^\nu+\alpha}\sum_{n=1}^{\infty}u^n\sum_{k=1}^{n}\binom{n-1}{k-1}\frac{(-1)^k}{z^\nu+k\lambda}\\
		&=\frac{z^{\nu-1}}{z^\nu+\alpha}+\frac{z^{\nu-1}\alpha}{(z^\nu+\alpha)\lambda}\sum_{n=1}^{\infty}u^n\sum_{k=0}^{n-1}\binom{n-1}{k}\frac{(-1)^k}{z^\nu/\lambda+1+k}\\
		&=\frac{z^{\nu-1}}{z^\nu+\alpha}+\frac{z^{\nu-1}\alpha}{(z^\nu+\alpha)\lambda}\sum_{n=1}^{\infty}u^n\frac{(n-1)!}{(z^\nu/\lambda+1)(z^\nu/\lambda+2)\dots(z^\nu/\lambda+n)},\ \ \text{(using (\ref{formula}))}\\
		&=\frac{z^{\nu-1}}{z^\nu+\alpha}+\frac{z^{\nu-1}\alpha u}{(z^\nu+\alpha)\lambda}\sum_{j=0}^{\infty}u^jB(j+1,z^\nu/\lambda+1)\\
		&=\frac{z^{\nu-1}}{z^\nu+\alpha}+\frac{z^{\nu-1}\alpha u}{(z^\nu+\alpha)\lambda}\int_{0}^{1}\sum_{j=0}^{\infty}u^jw^j(1-w)^{z^\nu/\lambda}\,\mathrm{d}w\\
		&=\frac{z^{\nu-1}}{z^\nu+\alpha}+\frac{z^{\nu-1}\alpha u}{(z^\nu+\alpha)\lambda}\int_{0}^{1}\frac{(1-w)^{z^\nu/\lambda}}{(1-uw)}\,\mathrm{d}w.
	\end{align*}
	On substituting $ 1-w=e^{-\lambda x}$, we get the required result.
\end{proof}

\subsection{Time-fractional linear death process with immigration} Suppose we allow $\lambda=0$ in LBDPwI then  its time-changed version \textit{viz} TFLBDPwI reduces to the time-fractional linear death process with immigration (TFLDPwI). We denote it by $\{\hat{N}^\nu(t)\}_{t\ge0}$. So, the TFLDPwI can attain only two possible states that are state $0$ and state $1$. For any $t\ge0$, the marginal distribution of $\hat{N}^\nu(t)$ is Bernoulli with success probability $\hat{p}^\nu(1,t)=\mathrm{Pr}\{\hat{N}^\nu(t)=1\}$. The state probabilities of TFLDPwI solve
\begin{equation}\label{fidpequ}
		\frac{\mathrm{d}^\nu }{\mathrm{d}t^\nu}\hat{p}^\nu(n,t)=\begin{cases}
			-\alpha \hat{p}^\nu(0,t)+\mu \hat{p}^\nu(1,t),\ n=0,\vspace{0.1cm}\\
			-\mu\hat{ p}^\nu(1,t)+\alpha \hat{p}^\nu(0,t),\ n=1,
		\end{cases}
\end{equation}
with $\hat{p}^\nu(0,0)=1$.
Let $g(t)=({\mathrm{d}^\nu}/{\mathrm{d}t^\nu})\hat{p}^\nu(0,t)$. On taking the Caputo fractional derivative on both sides of (\ref{fidpequ}) and using $({\mathrm{d}^\nu}/{\mathrm{d}t^\nu})\hat{p}^\nu(0,t)=-({\mathrm{d}^\nu}/{\mathrm{d}t^\nu})\hat{p}^\nu(1,t)$, we get
$
	({\mathrm{d}^\nu}/{\mathrm{d}t^\nu})g(t)=-(\alpha+\mu)g(t)
$
with initial condition $g(0)=-\alpha.$
By using (\ref{frderlap}), the Laplace transform of ${g}(t)$ is $
\tilde{g}(z)={-\alpha z^{\nu-1}}/{(z^\nu+\alpha+\mu)}.
$ Its inverse Laplace transform gives
\begin{equation}\label{invgt}
	\frac{\mathrm{d}^\nu \hat{p}^\nu(0,t)}{\mathrm{d}t^\nu}=-\alpha E_{\nu,1}(-t^\nu(\alpha+\mu))
\end{equation}
with $\hat{p}^\nu(0,0)=1$.
On taking the Laplace transform on both sides of (\ref{invgt}), we get
\begin{equation*}
	\tilde{\hat{p}}^\nu(0,z)=\frac{1}{z}-\frac{\alpha}{\alpha+\mu}\left(\frac{1}{z}-\frac{z^{\nu-1}}{z^\nu+\alpha+\mu}\right).
\end{equation*}
By taking inverse Laplace transform, we obtain
\begin{equation}\label{p00}
\hat{p}^\nu(0,t)=\frac{\alpha E_{\nu,1}(-t^\nu(\alpha+\mu))+\mu}{\alpha+\mu}.
\end{equation}
Now, on substituting (\ref{invgt}) and (\ref{p00}) in (\ref{fidpequ}), we get
\begin{equation*}
	\hat{p}^\nu(1,t)=\frac{\alpha(1-E_{\nu,1}(-t^\nu(\alpha+\mu)))}{\alpha+\mu}.
\end{equation*}
\begin{remark}
	For $\nu=1$, the TFLDPwI reduces to a linear death process with immigration. In this case, the state probabilities are given by
	\begin{equation*}
		\hat{p}(0,t)=
			\frac{\alpha e^{-t(\alpha+\mu)}+\mu}{\alpha+\mu}\ \ \text{and}\ \ 
			\hat{p}(1,t)=\frac{\alpha(1-e^{-t(\alpha+\mu)})}{\alpha+\mu}.		
	\end{equation*}
\end{remark}


\begin{thebibliography}{00}
	\bibitem{Adomian1984}
	Adomian, G., 1984. A new approach to nonlinear partial differential equations. \textit{J. Math. Anal. Appl.} \textbf{102}, 420-434. 
	\bibitem{Adomian1985}
	 Adomian, G., 1985. Non-linear stochastic dynamical systems in physical problems. \textit{J. Math. Anal.
	Appl.} \textbf{111}(1), 105-113.
	\bibitem{Adomian1986}
	Adomian, G., 1986. Nonlinear Stochastic Operator Equations. Academic Press, Orlando.
	\bibitem{Adomian1986}
	Adomian, G., 1986. Systems of nonlinear partial differential equations. \textit{J. Math. Anal. Appl.} \textbf{115}, 235-238. 
	\bibitem{AdomianRach1986}
	Adomian, G., Rach, R., 1986.  On composite nonlinearities and the decomposition method. \textit{J. Math. Anal. Appl.} \textbf{113}, 504-509. 
	\bibitem{Adomian1994}
	Adomian, G., 1994. Solving Frontier Problems of Physics: The Decomposition Method. Kluwer Academic, Dordrecht.
	\bibitem{Applebaum2004}
	Applebaum, D., 2004. L\'evy Processes and Stochastic Calculus. Cambridge University Press, New York.
	\bibitem{Beghin2009}
	Beghin, L., Orsingher, E., 2009. Fractional Poisson processes and related planar random	motions. {\it Electon. J. Probab.} \textbf{14}(61), 1790-1826.
	\bibitem{Duan2010}
		Duan, J.S., 2010. Recurrence triangle for Adomian polynomials. \textit{Appl. Math. Comput.} \textbf{216}, 235-1241.
	
		\bibitem{Duan2011}
		Duan, J.S., 2011. Convenient analytic recurrence algorithms for the Adomian polynomials. \textit{Appl.
		Math. Comput.} \textbf{217}, 6337-6348.
		
		\bibitem{Crescenzo2016}
		Di Crescenzo, A., Martinucci, B., Rhandi, A., 2016. A multispecies birth-death-immigration process and its diffusion approximation. {\it J. Math. Anal. Appl.} \textbf{442}, 291-316.
		\bibitem{Dessalles2018}
		Dessalles, R., D'Orsogna, M., Chou, T., 2018. Exact steady-state distributions of multispecies
		birth-death-immigration processes: effects of mutations
		and carrying capacity on diversity. {\it J. Stat. Phys.} \textbf{173}, 182-221.
		
	\bibitem{Feller1964}
	Feller, W., 1968. An Introduction to Probability Theory and Its Applications, Vol. 1, 3rd ed. Wiley, New
	York. 
	\bibitem{Kirschenhofer1996}
	Kirschenhofer, P., 1996. A note on alternating sums. {\it Electron. J. Combin.} \textbf{3}, 1-10.
	\bibitem{Kilbas2006}
	Kilbas, A.A., Srivastava, H.M., Trujillo, J.J., 2006. Theory and Applications of Fractional Differential
	Equations. Elsevier Science B.V., Amsterdam.
	\bibitem{Kataria2016}
	Kataria, K.K., Vellaisamy, P., 2016. Simple parametrization methods for generating Adomian polynomials.
	\textit{Appl. Anal. Discrete Math.} \textbf{10}(1), 168-185.
	\bibitem{Kataria2024}
	Kataria, K.K., Vishwakarma, P., 2024. Fractional Poisson random fields on $\mathbb{R}^2_+$.  arXiv:2407.15619v1
	\bibitem{Kataria2025}
	Kataria, K.K., Vishwakarma, P., 2024. On time-changed linear birth-death-immigration process. \textit{J. Theor. Probab.} (to appear)
	\bibitem{Leskin2003}
	Laskin, N., 2003. Fractional Poisson process. {\it Commun. Nonlinear Sci. Numer. Simul.} \textbf{8}(3–4), 201–213.
	\bibitem{Meerschaert2011}
	Meerschaert, M., Nane, E., Vellaisamy, P., 2011. The fractional Poisson process and the inverse stable subordinator. {\it Electron. J. Probab.} \textbf{16}, 1600-1620.
	\bibitem{Orsingher2004}
	Orsingher, E., Beghin, L., 2004. Time-fractional telegraph equations and telegraph processes with Brownian time. \textit{Probab. Theory Relat. Fields}. \textbf{128}, 141-160. 
	\bibitem{Orsingher2010}
	Orsingher, E., Polito, F., 2010. Fractional pure birth processes. \textit{Bernoulli}. \textbf{16}, 858-881.
	\bibitem{Orsingher2011}
	Orsingher, E., Polito, F., 2011. On a fractional linear birth-death process. \textit{Bernoulli}. \textbf{17}(1), 114-137.
	\bibitem{Rach1984}
	Rach, R., 1984. A convenient computational form for the Adomian polynomials. \textit{J. Math. Anal. Appl.} \textbf{102}, 415-419.
	\bibitem{Vishwakarma2024}
	Vishwakarma, P., Kataria, K.K., 2024. On the generalized birth-death process and its linear versions. \textit{J. Theor. Probab.} \textbf{37}, 3540-3580.
\end{thebibliography}
\end{document}